\newtheorem{theo}{Theorem}
\newtheorem{lem}[theo]{Lemma}
\theoremstyle{definition}
\theoremstyle{remark}
\newcounter{casenum}[theo]
\newcounter{subcasenum}[theo]
\newcounter{claimnum}[theo]
\begin{document}
\thispagestyle{plain}

\begin{center} {\Large Saturation Numbers for Linear Forests $P_6 + tP_2$
}
\end{center}
\pagestyle{plain}
\begin{center}
{
  {\small  Jingru Yan \footnote{ E-mail address: mathyjr@163.com}}\\[3mm]
  {\small  Department of Mathematics, East China Normal University, Shanghai 200241,  China }\\

}
\end{center}

\begin{center}

\begin{minipage}{140mm}
\begin{center}
{\bf Abstract}
\end{center}
{\small   A graph $G$ is $H$-saturated if it contains no $H$ as a subgraph, but does contain $H$ after the addition of any edge in the complement of $G$. The saturation number, $sat (n, H)$, is the minimum number of edges of a graph in the set of all $H$-saturated graphs with order $n$. In this paper, we determine the saturation number $sat (n, P_6 + tP_2)$ for $n \geq 10t/3 + 10$ and characterize the extremal graphs for $n >10t/3 + 20$.

{\bf Keywords.} Saturation number, saturated graph, linear forest }

{\bf Mathematics Subject Classification.} 05C35, 05C38
\end{minipage}
\end{center}

\section{Introduction}
In this paper we consider only simple graphs. For terminology and notations we follow the books \cite{BM,W}. Let $G$ be a graph with vertex set $V(G)$ and edge set $E(G)$. The order and the size of a graph $G$, denoted $|G|$ and $|E(G)|$, are its number of vertices and edges, respectively. For a vertex $v\in V(G)$, $d_G(v)$ is the degree of $v$ and $N_G(v)$ is the neighborhood of $v$. $N_G[v]=N_G(v)\cup \{v\}$. If the graph $G$ is clear from the context, we will omit it as the subscript. $\overline{G}$ and $\delta(G)$ denote the complement and minimum degree of a graph $G$, respectively. Denote by $G[A]$ the subgraph of $G$ induced by $A \subseteq V(G)$. $P_n, K_n$ and $S_n$ stand for \emph{path}, \emph{complete graph} and \emph{star} of order $n$, respectively.

Given graphs $G$ and $H$, a \emph{copy} of $H$ in $G$ is a subgraph of $G$ that is isomorphic to $H$. And the notation $G + H$ means the \emph{disjoint union} of $G$ and $H$. Then $tG$ denotes the disjoint union of $t$ copies of $G$. For graphs we will use equality up to isomorphism, so $G = H$ means that $G$ and $H$ are isomorphic.

A graph $G$ is \emph{$H$-saturated} if $G$ contains no $H$ as a subgraph but $G+e$ contains $H$ for any edge $e \in E(\overline{G})$. The set of $H$-saturated graphs of order $n$ is denoted by $SAT(n, H)$. $\overline{SAT}(n, H)$ and $\underline{SAT}(n, H)$ stand for the  set of $H$-saturated graphs with maximum number of edges and minimum number of edges, respectively. The number of edges in a graph in $\overline{SAT}(n, H)$ is Tur\'{a}n  number \cite{T}, denoted by $ex(n, H)$. The number of edges in a graph in $\underline{SAT}(n, H)$ is saturation number, denoted by $sat (n, H)$.

The first result about the saturation number of a graph was introduced by Erd\H{o}s, Hajnal, and Moon in \cite{EHM} in which the authors proved $sat(n,K_t) =\binom{t-2}{2}+(n-t+2)(t-2)$ and
$\underline{SAT}(n,K_t) = \{K_{t-2}\vee \overline{K}_{n-t+2}\}$, where $\vee $ denotes the \emph{join} of $K_{t-2}$ and $\overline{K}_{n-t+2}$, which is obtained from $K_{t-2} + \overline{K}_{n-t+2}$ by adding edges joining every vertex of $K_{t-2}$ to every vertex of $\overline{K}_{n-t+2}$. In addition to cliques, some of the graphs for which saturation number is known include unions of cliques \cite{BFO,FFGJ}, complete bipartite graphs \cite{B,C2,SW}, forests \cite{CFFGJM,FW}, books \cite{CFG}, small cycles \cite{C,ZT} and trees \cite{FFG,KT}.

In fact, both $sat(n, tP_2)$ and $\underline{SAT}(n, tP_2)$ are established by K\'{a}szonyi and Tuza in \cite{KT}. Chen et al. \cite{CFFGJM} focused on the saturation numbers for $P_k + tP_2$ with $k\geq 3$. Fan and Wang \cite{FW} determined the saturation number $sat(n, P_5 + tP_2)$ for $n \geq 3t + 8$ and characterized the extremal graphs for $n > (18t + 76)/5$, such as the following results.
\begin{theo}\label{th1}\cite{KT}
For $n\geq 3t-3$, $sat(n,tP_2)=3t-3$ and $\underline{SAT}(n,tP_2)= \{(t-1)K_3 + \overline{K}_{n-3t+3}\}$ or $t=2,n=4,\underline{SAT}(4,2P_2)=\{K_3 + K_1, S_4\}$.
\end{theo}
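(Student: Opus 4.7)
My plan is to prove the theorem in two stages: first a construction giving the upper bound $sat(n,tP_2)\le 3t-3$, then a structural analysis of any $tP_2$-saturated graph yielding the lower bound and the characterization of extremal graphs.

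For the upper bound, I would take $G^{*}=(t-1)K_3+\overline{K}_{n-3t+3}$ and verify saturation. Since each triangle contributes at most one edge to any matching, the matching number of $G^{*}$ is $t-1$, so $G^{*}$ contains no $tP_2$, and $|E(G^{*})|=3(t-1)$. For any non-edge $e$, one obtains a $t$-matching in $G^{*}+e$ by casework on whether $e$ lies in $\overline{K}_{n-3t+3}$, crosses between $\overline{K}_{n-3t+3}$ and a triangle, or joins two triangles; in each case $e$ combines with one edge from each triangle not disturbed by $e$.

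For the lower bound, let $G$ be $tP_2$-saturated on $n\ge 3t-3$ vertices. A standard argument shows the matching number of $G$ equals $t-1$ (otherwise adding any edge leaves the matching number below $t$). Saturation then reformulates as: for every non-edge $uv$, the graph $G-\{u,v\}$ has a matching of size $t-1$. I would analyze the components of $G$: for a component $C$ with matching number $c$ and a non-edge $uv$ with $u,v\in V(C)$, the trivial bound on the matching number of $C-\{u,v\}$, namely $\lfloor(|V(C)|-2)/2\rfloor$, combined with the requirement that this number be at least $c$, forces $|V(C)|\ge 2c+2$. Contrapositively, every component with $|V(C)|\le 2c+1$ is a clique, hence (since $|V(C)|\ge 2c$ always) equals $K_{2c}$ or $K_{2c+1}$. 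To exclude $K_{2c}$ and any non-clique component with $|V(C)|\ge 2c+2$, I would use a non-edge \emph{between} $C$ and the remainder of $G$: in each such case $C$ contains a vertex $v$ essential to every maximum matching of $C$, so the matching number of $G-v$ is at most $t-2$, and picking $w\notin V(C)$ with $vw\notin E(G)$ gives a matching number of at most $t-2$ in $G-\{v,w\}$, contradicting saturation. The only way this final argument fails is when $V(C)$ is all of $V(G)$, which for $t=2$ admits $K_{1,n-1}$; otherwise only $K_{2c+1}$ (odd cliques, $c\ge 1$) and isolated vertices survive as components.

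Given this structure, writing $c_1,\ldots,c_\ell$ for the matching numbers of the surviving non-trivial components, one has $\sum c_i=t-1$ and
\[
|E(G)|=\sum_{i}\binom{2c_i+1}{2}=\sum_{i}c_i(2c_i+1)\ge 3\sum_{i}c_i=3(t-1),
\]
with equality iff each $c_i=1$; hence $sat(n,tP_2)=3t-3$ with extremal graph $(t-1)K_3+\overline{K}_{n-3t+3}$. The $t=2$, $n=4$ exception arises because the ``between-component'' argument cannot reach outside the single star component $K_{1,3}=S_4$, producing a second extremal graph. The main obstacle I anticipate is the case analysis ruling out non-odd-clique components: one must verify in each non-trivial case ($K_{2c}$, $K_{1,k}$ for $k\ge 2$, paths, cycles, bowties, etc.) that either a non-edge internal to $C$ already breaks saturation or $C$ contains an essential vertex that breaks it via a non-edge to the exterior.
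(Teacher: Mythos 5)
First, a remark on the comparison itself: the paper offers no proof of this statement --- Theorem \ref{th1} is quoted verbatim from K\'aszonyi and Tuza \cite{KT} --- so there is no in-paper argument to measure your route against, and your proposal has to stand on its own.

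On its own terms, your upper-bound construction, the reduction of saturation to ``$G-\{u,v\}$ has a $(t-1)$-matching for every non-edge $uv$,'' and the clique/odd-clique analysis of non-spanning components are all correct (the claim that a connected non-clique component with $|V(C)|\ge 2c+2$ has an essential vertex is Gallai's lemma, which you should cite or prove). The genuine gap is the case where the component $C$ is all of $V(G)$, i.e.\ $G$ is connected. You dismiss this with ``which for $t=2$ admits $K_{1,n-1}$,'' but for $t\ge 3$ connected $tP_2$-saturated graphs on $n\ge 3t-3$ vertices do exist and are not unions of odd cliques: take a vertex $v$ joined to all other vertices, with $t-2$ disjoint triangles and at least two further isolated-in-$G-v$ vertices among them. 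This graph has matching number $t-1$, is saturated, and survives your entire analysis (its unique essential vertex $v$ is dominating, so no non-edge at $v$ is available), yet your final edge count $\sum_i\binom{2c_i+1}{2}$ simply does not apply to it. So the lower bound is not proved for connected saturated graphs, which a priori could be trees or unicyclic graphs with only $3t-4$ or $3t-3$ edges.

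The fix is to run your essential-vertex argument with $w$ being \emph{any} non-neighbor of $v$ in $G$, not necessarily one outside $C$: if $v$ is essential and $vw\notin E(G)$, then $\alpha'(G-\{v,w\})\le\alpha'(G-v)=t-2$, contradicting saturation. Hence every essential vertex of a saturated $G$ is dominating. If $G$ is connected with no essential vertex it is factor-critical by Gallai's lemma, so $n=2t-1<3t-3$ for $t\ge 3$, a contradiction; if it has a dominating vertex, a short count (the dominating vertex contributes $n-1\ge 3t-4$ edges, and for $t\ge 3$ the remaining $t-2\ge 1$ matching edges force at least an odd component on $\ge 3$ vertices in $G-v$, adding at least $3$ more edges) pushes $|E(G)|$ strictly above $3t-3$, leaving only the star $S_4$ at $t=2$, $n=4$. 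With that insertion your proof closes.
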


\begin{theo}\label{th2}\cite{CFFGJM}
For $n$ sufficiently large,\\
(1) $sat(n, P_3 + tP_2) = 3t$ and $tK_3 + \overline{K}_{n-3t}\in \underline{SAT}(n, P_3 + tP_2)$,\\
(2) $sat (n, P_4 + tP_2) = 3t+7$ and $K_5 + (t-1)K_3 + \overline{K}_{n-3t-2} \in \underline{SAT}(n, P_4 + tP_2)$.
\end{theo}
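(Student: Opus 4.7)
My plan is to prove Theorem~\ref{th2} by a matching upper/lower bound argument for both parts.

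\textbf{Upper bound via the two explicit constructions.} For part~(1) I take $G_1 := tK_3 + \overline{K}_{n-3t}$, with $|E(G_1)|=3t$, and verify that $G_1$ is $(P_3+tP_2)$-saturated. The graph contains no $P_3+tP_2$ because every $P_3$ in $G_1$ lies entirely inside one triangle, consuming all three of its vertices; the remainder $(t-1)K_3 + \overline{K}_{n-3t}$ has matching number only $t-1$, so no vertex-disjoint $tP_2$ remains. For any non-edge $e=uv$ I case-split on whether $u,v$ are isolated or lie in triangles, and in each case construct the forbidden copy in $G_1+e$ by using some untouched triangle as the $P_3$, pairing off the remaining triangles as $P_2$'s, and using $e$ together with any surplus edges in the triangle(s) containing $u$ or $v$ to furnish the missing $P_2$'s. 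Part~(2) follows the same template with $G_2 := K_5 + (t-1)K_3 + \overline{K}_{n-3t-2}$, which has $\binom{5}{2} + 3(t-1) = 3t+7$ edges: any $P_4$ must sit inside the $K_5$-block, and after four of its vertices are spent there are only $t-1$ matching edges among the triangles; the four spare edges in $K_5$ then absorb any edge-perturbation in the saturation check.

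\textbf{Lower bound.} Let $G$ be $(P_3+tP_2)$-saturated on $n$ vertices. Since the matching number $\mu(P_3+tP_2)=t+1$ and adding one edge can raise $\mu$ by at most one, saturation forces $\mu(G)\geq t$. A K\'{a}szonyi--Tuza-style analysis applied to the components of $G$ supporting a maximum matching then shows that each such component must survive the loss of three vertices (to the $P_3$ created by an added edge) while still supplying a $tP_2$ vertex-disjoint from that $P_3$; this rigidifies each component to contain at least $3$ edges, yielding $|E(G)|\geq 3t$. For part~(2) the same scheme with $\mu(G)\geq t+1$ identifies some component that must host a $P_4$ while still yielding a $tP_2$ after four of its vertices are removed, forcing a block of order at least $5$ and the additional ten edges of $K_5$, giving $|E(G)|\geq 3t+7$. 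Combining these with the matching upper bounds establishes both the value of $sat(n,P_k+tP_2)$ and the claimed membership in $\underline{SAT}$.

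\textbf{Main obstacle.} The hard part is the lower bound's structural classification: one must rule out ``hybrid'' components that cleverly play double duty as $P_k$-hosts and multi-edge matching contributors, so that no graph with fewer than the stated number of edges can satisfy the saturation condition. The interaction with isolated vertices is equally delicate, since saturation requires every non-edge---including those between two currently isolated vertices---to complete a $P_3+tP_2$ (resp.\ $P_4+tP_2$), and this is precisely what forces the components of $\underline{SAT}$-graphs to be as dense as the constructions suggest. The hypothesis ``$n$ sufficiently large'' arises here, allowing enough isolated vertices to decouple the case analysis and making each matching-supporting component essentially independent.
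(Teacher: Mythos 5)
First, note that Theorem~\ref{th2} is quoted from \cite{CFFGJM}; the present paper contains no proof of it, so there is nothing internal to compare your argument against. Judged on its own terms, your proposal is an outline rather than a proof, and the outline omits exactly the part that carries all the difficulty. The upper bound half is essentially fine in spirit (construction, freeness via ``the $P_3$ (resp.\ $P_4$) eats a whole clique and the leftover matching number drops to $t-1$'', saturation via case analysis on the added edge), although even there your claim that ``in each case'' one can complete the copy is not checked: for $tK_3+\overline{K}_{n-3t}$ with $t=2$ an added edge joining two triangles yields a graph with only $6$ nonisolated vertices, while $P_3+2P_2$ needs $7$, and for $t=1$ an added isolated--triangle edge similarly fails; so the saturation of the stated extremal graph genuinely requires $t$ not too small, a restriction your sketch does not surface. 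This is a sign the case analysis was asserted rather than performed.

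The real gap is the lower bound. For part (1) you write that a ``K\'aszonyi--Tuza-style analysis \ldots rigidifies each component to contain at least $3$ edges, yielding $|E(G)|\geq 3t$,'' and for part (2) that the same scheme ``forc[es] a block of order at least $5$ and the additional ten edges of $K_5$.'' These sentences are restatements of the conclusion, not arguments. From $\alpha'(P_3+tP_2)=t+1$ and the fact that adding an edge raises the matching number by at most one you only get $\alpha'(G)\geq t$, i.e.\ at least $t$ edges; nothing in your sketch rules out matching-supporting components that are single edges, stars, or other sparse ``hybrid'' structures, nor does it explain why the $P_4$-host must be as dense as $K_5$ rather than, say, a sparser graph on more vertices that still blocks every $P_4+tP_2$ while completing one after any edge addition. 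You correctly identify this classification as ``the hard part'' in your final paragraph, but identifying the obstacle is not the same as overcoming it: a referee would say the lower bound, which is the entire content of the theorem, is missing. To repair this you would need the kind of structural machinery the present paper develops for the $P_6$ case (the Berge--Tutte/Gallai--Edmonds decomposition argument of Theorem~\ref{lem5}, the component classification of Lemma~\ref{lem4}, and the isolated-vertex lemma, Lemma~\ref{lem3}), adapted to $P_3$ and $P_4$, or simply to cite \cite{CFFGJM} as the paper does.
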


\begin{theo}\label{th3}\cite{FW}
Let $n$ and $t$ be two positive integers with $n\geq 3t+8$. Then,\\
(1) $sat(n, P_5 + tP_2) = min\{\lceil\frac{5n-4}{6}\rceil,3t+12\}$,\\
(2) $\underline{SAT}(n, P_5 + tP_2) = \{K_6 + (t-1)K_3 + \overline{K}_{n-3t-3}\}$ for $n > \frac{18t+76}{5}$ .
\end{theo}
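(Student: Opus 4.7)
My approach splits into the upper bound, the lower bound, and the extremal characterization in part (2). For the upper bound in part (1) I would verify directly that $G_1 := K_6 + (t-1)K_3 + \overline{K}_{n-3t-3}$ is $(P_5+tP_2)$-saturated; it has $\binom{6}{2} + 3(t-1) = 3t+12$ edges. Any $P_5$ in $G_1$ must sit inside one component, so it lives in $K_6$ and consumes five of its six vertices, leaving only $t-1$ pairwise disjoint edges available; hence $P_5 + tP_2 \not\subseteq G_1$. Adding any non-edge $e$ is handled by a short case analysis on the endpoints of $e$: whenever $e$ meets $\overline{K}_{n-3t-3}$ or bridges two $K_3$'s, $K_6$ still supplies $P_5$ while $e$ together with the triangle edges yields the $t$ required independent edges. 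For the other branch $\lceil(5n-4)/6\rceil$, which dominates when $n \leq (18t+76)/5$, I would exhibit a sparser saturated graph built from disjoint copies of a standard $P_5$-saturation gadget (roughly $5$ edges per $6$ vertices), tweaked so that the matching number cannot reach $t$ after one edge is added.

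For the lower bound, take $G \in SAT(n, P_5+tP_2)$. Since $G+uv$ contains $P_5+tP_2$ for every non-edge $uv$ while $G$ itself does not, at least one component $H$ of $G$ must be \emph{$P_5$-providing}: for any non-edge $uv$ outside $V(H)$, the resulting copy of $P_5+tP_2$ in $G+uv$ is forced to use $H$ for its $P_5$. Letting $M_H$ be a maximum matching of $H$ disjoint from some embedded $P_5\subseteq H$, the residual graph $G-V(H)$ is then essentially $(t-|M_H|)P_2$-saturated. Applying Theorem \ref{th1} to $G-V(H)$ bounds the edges outside $H$ below; combining this with the edges forced inside $H$ and performing a case analysis on $|V(H)|$ and $|M_H|$ yields $|E(G)| \geq \min\{\lceil(5n-4)/6\rceil,\, 3t+12\}$.

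For part (2) with $n > (18t+76)/5$, assume $G$ is extremal with exactly $3t+12 < \lceil(5n-4)/6\rceil$ edges. The inequalities above must be tight, which forces $|V(H)|=6$ and $|M_H|=0$; the edge count $|E(H)|=15$ together with the saturation condition inside $H$ then gives $H=K_6$. Moreover $G-V(H)$ attains equality in Theorem \ref{th1} with $t-1$ in place of $t$, so it equals $(t-1)K_3 + \overline{K}_{n-3t-3}$. Finally, saturation rules out any edge between $V(H)$ and the rest, completing the identification of the extremal graph.

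The main obstacle is the lower-bound structural step, specifically identifying the $P_5$-providing component $H$ and pinning down its internal structure. When an added edge $e=uv$ completes a $P_5+tP_2$, the edge $e$ can play one of five distinct roles (any of the four edge positions inside $P_5$, or one of the $t$ independent edges), and coordinating these roles over all non-edges incident to a given vertex is what ultimately forces the rigid structure. This is analogous to, but more intricate than, the corresponding argument for $sat(n, P_4+tP_2)$ in Theorem \ref{th2}, because the internal freedom of a $P_5$ inside a dense component (the choice of where the $P_5$ starts, and which vertex is its ``spare'') is larger.
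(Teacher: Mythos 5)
First, a point of reference: Theorem~\ref{th3} is quoted from \cite{FW} and the present paper contains no proof of it, so the only benchmark available here is the paper's own argument for the $P_6+tP_2$ analogue (Theorem~\ref{th4}, via Theorems~\ref{lem5}, \ref{lem6}, \ref{th5} and Lemma~\ref{lem7}), which follows the same template as \cite{FW}. Measured against that template, your sketch has a genuine gap in the lower bound. The pivotal claim that some component $H$ is ``$P_5$-providing'' --- that for every non-edge $uv$ outside $V(H)$ the forced copy of $P_5+tP_2$ in $G+uv$ takes its $P_5$ inside $H$ --- is false exactly in the regime where $\lceil(5n-4)/6\rceil$ is the minimum. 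The extremal graphs there are forests of small trees (the $P_5$ analogue of $(q-1)T+T^{\ast}$ in Section~3): no component contains a $P_5$ at all, and when $uv$ joins two tree components the new $P_5$ necessarily uses the added edge $uv$ and straddles both components, so no component is $P_5$-providing. With that claim gone, the decomposition ``$P_5$ from $H$, matching from $G-V(H)$'' cannot be set up, and the assertion that $G-V(H)$ is ``essentially $(t-|M_H|)P_2$-saturated'' is also unjustified: deleting a component of an $H$-saturated graph does not in general leave a graph saturated for a reduced target (establishing a statement of this kind is precisely the content of Theorem~\ref{lem5} here, and it needs the isolated-vertex hypothesis plus Lemma~\ref{lem3} and the Berge--Tutte formula).

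The strategy that actually works (in \cite{FW}, mirrored in Sections~2--3 for $P_6$) is a trichotomy on $|V_0(G)|$, not on a distinguished component: for $|V_0(G)|\geq 2$ one combines Lemma~\ref{lem3} with Lemma~\ref{lem1} to force $|E(G)|\geq 3t+12$ with equality only for $K_6+(t-1)K_3+\overline{K}_{n-3t-3}$; for $|V_0(G)|=1$ the degree-sum bound already gives $|E(G)|\geq n-1$; and for $|V_0(G)|=0$ one proves that every tree component is large (the $P_5$ analogue of Lemma~\ref{lem7}), so that $|E(G)|\geq n-k\geq\lceil(5n-4)/6\rceil$. Your verification that $K_6+(t-1)K_3+\overline{K}_{n-3t-3}$ is saturated with $3t+12$ edges is fine, but the other half of the upper bound is left as ``a standard $P_5$-saturation gadget, tweaked''; this is exactly the part that requires an explicit tree construction and a check that adding any edge creates $P_5+tP_2$, and without it the bound $sat(n,P_5+tP_2)\leq\lceil(5n-4)/6\rceil$ in the range $3t+8\leq n\leq(18t+76)/5$ is not established. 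Part~(2) inherits the same problem, since it rests on tightness in the unproved lower-bound chain.
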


In this paper, we further consider the saturation number of the linear forests $P_6 + tP_2$ with $t\geq 1$. The $t$ mentioned below all satisfy that $t\geq 1$.

\begin{theo}\label{th4}
Let $n$ and $t$ be two positive integers with $n\geq 10t/3+10$. Then,\\
(1) $sat(n, P_6 + tP_2) = min\{n-\lfloor\frac{n}{10}\rfloor,3t+18\}$,\\
(2) $\underline{SAT}(n,P_6 + tP_2) = \{K_7 + (t-1)K_3 + \overline{K}_{n-3t-4}\}$ for $n > \frac{10t}{3}+20$ .
\end{theo}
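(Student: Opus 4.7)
My plan is to prove Theorem \ref{th4} in three stages: exhibit $P_6+tP_2$-saturated graphs attaining each of the two candidates in the formula, prove the matching lower bound, and sharpen the equality case for the regime $n>10t/3+20$.

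\textbf{Constructions.} First I verify that $H^{*}:=K_7+(t-1)K_3+\overline{K}_{n-3t-4}$ is $P_6+tP_2$-saturated. Any $P_6$ in $H^{*}$ must lie inside $K_7$, since every other component has diameter at most one and no edges cross between the pieces; once such a $P_6$ is fixed only one vertex of $K_7$ remains, so $H^{*}-V(P_6)$ has matching number exactly $t-1$, and hence $H^{*}$ does not contain $P_6+tP_2$. For saturation I case split on the type of non-edge (two vertices of $\overline{K}_{n-3t-4}$; an isolated vertex and a triangle vertex; two distinct triangles; a $K_7$-vertex and an exterior vertex) and check that in each case the exterior matching jumps to $t$ while a $P_6$ persists in $K_7$. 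For the second candidate $n-\lfloor n/10\rfloor$ I will take a disjoint union of $\lfloor n/10\rfloor$ copies of a $P_6$-saturated $10$-vertex tree together with a $P_6$-saturated padding on the remaining $n-10\lfloor n/10\rfloor$ vertices; the union is $P_6$-free (hence $(P_6+tP_2)$-free), and saturation propagates componentwise because adding any edge creates a $P_6$ in or between components and, under the hypothesis $n\geq 10t/3+10$, the unaffected components carry a matching of size at least $t$.

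\textbf{Lower bound.} Let $G\in SAT(n,P_6+tP_2)$. I will split into two cases. If $G$ is $P_6$-free, then every non-edge $uv$ must, upon insertion, create a $P_6$ through $uv$ together with $t$ disjoint extra $P_2$'s, so each component of $G$ is individually $P_6$-saturated within its span; summing the standard per-component edge ratios (minimised at $9/10$ by certain $10$-vertex saturated trees) yields $|E(G)|\geq n-\lfloor n/10\rfloor$. If instead $G$ contains a $P_6$ on a vertex set $S$, the omission of $P_6+tP_2$ forces the matching number of $G-S$ to be at most $t-1$, and the saturation condition applied to non-edges inside $S$ and to non-edges crossing from $S$ to $V(G)\setminus S$ produces strong structural constraints on $G[S]$ and on $G-S$. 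A local rerouting argument (given a non-edge $e$, replace the fixed $P_6$ by a rerouted $P_6$ that uses $e$ and read off a matching in the complement) should force $|E(G[S])|\geq 21$ on at least seven vertices and $|E(G-S)|\geq 3(t-1)$, yielding $|E(G)|\geq 3t+18$.

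\textbf{Extremal characterization.} For $n>10t/3+20$ a direct comparison gives $3t+18<n-\lfloor n/10\rfloor$, so any minimum-edge saturated graph falls in the $P_6$-containing case above. The delicate step is showing that equality $|E(G)|=3t+18$ rigidifies $G$ to $H^{*}$: using the uniqueness clause of Theorem \ref{th1} I will pin $G-S\cong(t-1)K_3+\overline{K}_{n-3t-4}$ from the matching-number bound together with $|E(G-S)|=3(t-1)$, and by further exploiting saturation on non-edges incident to $S$ I will force $G[S]\cong K_7$ on exactly seven vertices with no edges crossing to the exterior. The principal obstacle I anticipate is precisely this rigidity analysis for $G[S]$: I must simultaneously rule out $|S|>7$, $G[S]$ being a dense non-clique, and any cross-edge between $S$ and $V(G)\setminus S$, by showing each such deviation either exceeds the edge budget of $3t+18$ or admits a carefully chosen non-edge whose insertion fails to produce $P_6+tP_2$, contradicting saturation.
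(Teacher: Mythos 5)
Your overall architecture (two constructions, a matching lower bound, then rigidity) parallels the paper's, and your constructions are essentially the right ones, but the lower bound contains two genuine gaps. First, in the $P_6$-free case you assert that the per-component edge ratio is ``minimised at $9/10$ by certain $10$-vertex saturated trees''; this is exactly the hard step, not a standard fact. What is actually needed is that every tree component of a $(P_6+tP_2)$-saturated graph has order at least $10$ (so that the number $k$ of tree components satisfies $k\leq\lfloor n/10\rfloor$ and $|E(G)|\geq n-k$); the paper devotes Lemma \ref{lem7} --- a long case analysis of how a $P_6$ through a joining edge decomposes across two tree components --- to precisely this, and your sketch supplies no substitute. (You would also need to handle a possible isolated vertex, a tree component with ratio $0$.) Second, and more seriously, your claim that any saturated $G$ containing a $P_6$ on $S$ satisfies $|E(G[S])|\geq 21$ and $|E(G-S)|\geq 3(t-1)$ does not follow from what you cite: $\alpha'(G-S)\leq t-1$ gives no lower bound on $|E(G-S)|$ whatsoever, and there is no visible mechanism forcing a $K_7$ inside a $P_6$-containing saturated graph that has no isolated vertices. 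The paper obtains the $K_7+(t-1)K_3$ structure only under the hypothesis $|V_0(G)|\geq 2$, via Lemma \ref{lem3} (which places $N_G[x]\cup\{w\}$ inside a $(2t+6)$-vertex copy of $P_6+tP_2$), the Berge--Tutte formula applied to a carefully chosen set $Y$ (Theorem \ref{lem5}), and a component-by-component analysis (Theorem \ref{lem6}). The workable case split is therefore by $|V_0(G)|=0$, $=1$, or $\geq 2$, not by $P_6$-containment: graphs that contain a $P_6$ but have no isolated vertices are covered by the $n-\lfloor n/10\rfloor$ bound, not by $3t+18$.

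For the extremal characterization, the appeal to the uniqueness clause of Theorem \ref{th1} is also not legitimate: $G-S$ need not be $tP_2$-saturated (adding an edge inside $G-S$ must create a copy of $P_6+tP_2$ in $G$, but the $t$ disjoint edges of that copy need not all lie in $G-S$), so you cannot conclude $G-S\cong(t-1)K_3+\overline{K}_{n-3t-4}$ from the matching bound alone. A further small repair: your second construction places ``a $P_6$-saturated padding'' on the $r=n-10\lfloor n/10\rfloor\leq 9$ leftover vertices, but no separate component of order at most $9$ can occur here; the paper instead enlarges one tree to order $10+r$ (the tree $T^{\ast}$).
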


\section{Preliminaries}
For an integer $i\geq0$, let $V_i(G)=\{v\in V(G): d(v)=i\}$. In other words, $|V_0(G)|$ represents the number of isolated vertices in $G$. In this section, we list several lemmas and the result of the saturation numbers for linear forests $P_6 + tP_2$  with $|V_0(G)|\geq 2$.

\begin{lem}\label{lem1}
\rm{(Berge-Tutte Formula \cite{CB})} For a graph $G$,
$$\alpha'(G)=\frac{1}{2}min\{|G|+|S|-o(G-S):S\subseteq V(G)\},$$
where $\alpha'(G)$ is the matching number of $G$ and $o(G-S)$ is the number of odd components of $G-S$.
\end{lem}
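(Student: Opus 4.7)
The plan is to prove the two inequalities separately. Write $\beta(G):=\tfrac{1}{2}\min_{S\subseteq V(G)}(|G|+|S|-o(G-S))$ and $d(G):=\max_{S\subseteq V(G)}(o(G-S)-|S|)$, so $\beta(G)=(|G|-d(G))/2$ and the goal is $\alpha'(G)=\beta(G)$. A useful preliminary is the parity relation $d(G)\equiv|G|\pmod 2$, which follows because $o(G-S)\equiv|G|-|S|\pmod 2$ gives $o(G-S)-|S|\equiv|G|\pmod 2$ for every $S$; consequently $|G|-d(G)$ is even, so $\beta(G)$ is a well-defined integer.

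For the upper bound $\alpha'(G)\leq\beta(G)$, I would fix an arbitrary matching $M$ and an arbitrary $S\subseteq V(G)$. Since each odd component $C$ of $G-S$ has $|C|$ odd, the restriction of $M$ to $C$ covers an even number of vertices of $C$, so at least one vertex $v_C\in C$ is either unmatched by $M$ or matched by $M$ across to a vertex of $S$. The $v_C$ are distinct for different $C$, and at most $|S|$ of them can be matched into $S$, so the number of $M$-unmatched vertices is at least $o(G-S)-|S|$. Rearranging $|G|-2|M|\geq o(G-S)-|S|$ and optimizing over $M$ and $S$ gives $2\alpha'(G)\leq\min_{S}(|G|+|S|-o(G-S))$.

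For the lower bound $\alpha'(G)\geq\beta(G)$, set $d:=d(G)$ and form an auxiliary graph $G^{*}$ by adjoining $d$ new vertices, each adjacent to every vertex of $G$ and to each other, so $|G^{*}|=|G|+d$ is even by the parity observation above. The strategy is to verify Tutte's $1$-factor condition $o(G^{*}-T)\leq|T|$ for every $T\subseteq V(G^{*})$, so that $G^{*}$ has a perfect matching $M^{*}$ of size $(|G|+d)/2$, and then restrict: the $d$ new vertices contribute at most $d$ edges to $M^{*}$, so deleting all $M^{*}$-edges incident to new vertices discards at most $d$ edges, leaving a matching of $G$ of size at least $(|G|+d)/2-d=(|G|-d)/2=\beta(G)$.

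The main obstacle is the Tutte verification, which splits into two cases. If $T$ does not contain every new vertex, pick a new vertex $v\notin T$; since $v$ is adjacent in $G^{*}$ to every vertex of $V(G^{*})\setminus T$, the graph $G^{*}-T$ is connected, giving $o(G^{*}-T)\leq 1\leq|T|$ whenever $T\neq\emptyset$, while the borderline case $T=\emptyset$ reduces either to $d=0$ (in which case $o(G)\leq d=0$ directly from the definition of $d$) or to $d\geq 1$ with $G^{*}$ connected of even order, which forces $o(G^{*})=0$. If instead $T$ contains all $d$ new vertices, set $S:=T\cap V(G)$; then $G^{*}-T=G-S$ and $o(G^{*}-T)=o(G-S)\leq|S|+d=|T|$ by the definition of $d$. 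Once this case analysis is in place, combining the two directions yields $\alpha'(G)=\beta(G)$ as required.
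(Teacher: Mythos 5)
The paper does not prove this lemma at all: it is quoted as a classical result with a citation to Berge, so there is no internal proof to compare against. Your argument is the standard (and correct) derivation of the Berge--Tutte defect formula from Tutte's $1$-factor theorem. The upper bound is the usual counting argument: in each odd component of $G-S$ some vertex is either exposed or matched into $S$, and at most $|S|$ vertices can be matched into $S$, so the deficiency of any matching is at least $o(G-S)-|S|$. The lower bound via the auxiliary graph $G^{*}$ obtained by adding $d$ universal vertices is also handled correctly, including the parity check that $|G^{*}|$ is even and the two-case verification of Tutte's condition (the only delicate points, $T=\emptyset$ and $T\supseteq$ the new vertices, are both addressed). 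The one thing to flag is that the proof is not self-contained: it takes Tutte's $1$-factor theorem as a black box, which is entirely reasonable here but means the "hard" combinatorial content is outsourced rather than proved. Given that the paper itself treats the formula as a citation, your write-up is, if anything, more than what the context demands, and I find no gap in it.
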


\begin{lem}\label{lem2}\cite{CFFGJM}
Let $k_1,\ldots, k_m \geq 2$ be $m$ integers and $G$ be a $(P_{k_1} + P_{k_2}
+ \cdots + P_{k_m})$-saturated graph. If $d(x)=2$ and $N(x)=\{u,v\}$, then $uv\in E(G)$.
\end{lem}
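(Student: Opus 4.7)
My approach is by contradiction: assume $uv \notin E(G)$, and use the saturation hypothesis to manufacture a copy of $F := P_{k_1} + \cdots + P_{k_m}$ inside $G$ itself. By saturation, $G + uv$ contains such a copy, and since $uv$ is the only edge of $G+uv$ that is not in $G$, the copy must use $uv$. Write $Q$ for the path in the copy that contains $uv$; say $Q$ corresponds to $P_{k_i}$ and is labelled $v_1 v_2 \cdots v_{k_i}$ with $uv = v_j v_{j+1}$. The plan is to reroute $Q$ through $x$ so as to obtain an $F$-copy all of whose edges lie in $G$.

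The first step is to observe that $x$ is not a vertex of any path $Q' \neq Q$ of the copy. Indeed, if $x$ belonged to some such $Q'$, then (using $k_\ell \geq 2$) $x$ would have a neighbor $y$ on $Q'$; but $Q'$ uses only edges of $G$, so $xy \in E(G)$, forcing $y \in N_G(x) = \{u,v\}$. This is absurd, because $u$ and $v$ lie on $Q$ and the paths of the copy are vertex-disjoint.

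The second step splits according to whether $x \in V(Q)$. If $x \notin V(Q)$, then inserting $x$ between $u$ and $v$ yields the path $v_1 \cdots v_j \, x \, v_{j+1} \cdots v_{k_i}$ on $k_i + 1$ vertices, whose edges are all in $G$ (the edges of $Q$ except $uv$, together with $xu$ and $xv$) and which is vertex-disjoint from the remaining paths of the copy; truncating to the first $k_i$ vertices yields $F \subseteq G$, contradicting saturation. If $x = v_\ell \in V(Q)$, then $\ell \neq j, j+1$ (since $x \neq u,v$), and the one or two path-neighbors of $x$ must lie in $N_G(x) = \{v_j, v_{j+1}\}$. An internal position $1 < \ell < k_i$ forces $\{v_{\ell-1}, v_{\ell+1}\} = \{v_j, v_{j+1}\}$, impossible since the two indices in the first pair differ by $2$ and those in the second differ by $1$. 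Hence $\ell \in \{1, k_i\}$; by symmetry take $\ell = 1$, so $v_2 \in \{u,v\}$. A short check (looking at $v_3$) forces $\{v_2, v_3\} = \{u, v\}$, and then the path $v_2, x, v_{3-\text{other}}, v_4, \ldots, v_{k_i}$ --- i.e., swapping $x$ with its path-neighbor and using the two $G$-edges $xu,xv$ in place of $uv$ --- uses the same vertex set as $Q$, has all edges in $G$, and is again disjoint from the other paths of the copy; this once more exhibits $F$ in $G$.

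Both subcases contradict $F$-saturation, so $uv \in E(G)$, as required. I expect the main obstacle to be the second subcase: one must carefully enumerate the possible positions of $x$ on $Q$ and, in the surviving endpoint case, describe the rerouted path explicitly so that every used edge is verified to be in $G$. The first step (that $x$ avoids all other paths of the copy) is the cleanest part and is what makes the rerouting legal.
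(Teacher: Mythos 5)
The paper states this lemma as a citation from [5] (Chen, Faudree, Faudree, Gould, Jacobson, Magnant) and gives no proof of its own, so there is nothing internal to compare against. Your argument is correct and complete: the copy of the forest in $G+uv$ must use $uv$, the degree condition $N_G(x)=\{u,v\}$ confines $x$ to the path $Q$ containing $uv$ (or to no path at all), and in every surviving configuration you reroute through the two $G$-edges $xu,xv$ to exhibit the forbidden forest inside $G$. The case analysis for $x\in V(Q)$ --- ruling out internal positions by the index-gap argument and handling the endpoint case by swapping $x$ with its path-neighbour --- is exactly the care the statement requires, and all edges of the rerouted paths are verified to lie in $G$.
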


\begin{lem}\label{lem3'}\cite{FW}
Let $G$ be a $(P_5 + tP_2)$-saturated graph. If $V_0(G)\neq\emptyset$, then $V_1(G) =\emptyset$. Moreover, for any $x \in V(G) \setminus V_0(G)$, we have
$$N_G[x]\cup \{w\}\subseteq V(H),$$
where $H$ is any copy of $P_5 + tP_2$ in $G + xw$ and $w$ is a vertex in $V_0(G)$.
\end{lem}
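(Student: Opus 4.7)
The plan has two stages. First I would establish the ``Moreover'' clause by a direct case analysis on the location of the added edge $xw$ inside any $P_5+tP_2$ copy of $G+xw$. Second, I would deduce $V_1(G)=\emptyset$ by applying the Moreover clause to the edge $uw$, where $u$ is an alleged degree-one vertex and $w$ is an isolated vertex.

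For the Moreover clause, fix any copy $H$ of $P_5+tP_2$ in $G+xw$. Since $G$ itself contains no $P_5+tP_2$, the copy $H$ must use the new edge $xw$, so $x,w\in V(H)$; and since $w$ is isolated in $G$, its only $H$-neighbor is $x$, which forces $w$ to be a pendant vertex of its $H$-component. Now suppose, toward a contradiction, that some $y\in N_G(x)$ lies outside $V(H)$. If $xw$ is one of the $P_2$-components, then $H-\{x,w\}\cong P_5+(t-1)P_2$ embeds in $G$ while avoiding both $x$ and $y$, and adjoining the disjoint edge $xy$ as a new $P_2$ produces $P_5+tP_2\subseteq G$, a contradiction. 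If $xw$ instead lies in the $P_5$, then $w$ is a $P_5$-endpoint with $x$ adjacent, so the $P_5$ can be written as $wxv_3v_4v_5$; replacing the leaf $w$ by $y$ yields the $P_5$ $yxv_3v_4v_5$ in $G$, which together with the unchanged $tP_2$ again exhibits $P_5+tP_2\subseteq G$. Either way saturation is violated, so $N_G(x)\subseteq V(H)$.

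For the first assertion, suppose toward contradiction that $u\in V_1(G)$ with $N_G(u)=\{y\}$ and that $w\in V_0(G)$. Adding the edge $uw$ and taking any copy $H$ of $P_5+tP_2$, the Moreover clause yields $\{u,y,w\}\subseteq V(H)$. Because $u$ has at most two $H$-neighbors (namely $y$ and $w$) while $w$ has exactly one (namely $u$), $H$ admits only two configurations near the new edge: either $uw$ is itself a $P_2$-component, or the $P_5$ reads $wuyv_4v_5$. In the first configuration, $H-\{u,w\}\cong P_5+(t-1)P_2$ lies in $G$ and contains $y$; I would case-split on the three possible locations of $y$ (an endpoint of the $P_5$, an internal vertex of the $P_5$, or an endpoint of a $P_2$) and in each case perform a local surgery using the edge $uy$ to rearrange the configuration into $P_5+tP_2$ inside $G$, contradicting saturation. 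In the second configuration, $G$ already contains the $P_4$ given by $uyv_4v_5$ together with a disjoint $tP_2$; I would extend this $P_4$ to a $P_5$ in $G$ by producing a further neighbor of $v_5$ outside $V(H)$, and to guarantee one I would apply the Moreover clause a second time to the pair $(v_5,w)$ and exploit the constraints it forces, together with Lemma~\ref{lem2}.

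The main obstacle is the surgery in the first configuration of the second stage. The rearrangement must be tailored to each of the three positions of $y$, and in the awkward middle-of-$P_5$ sub-case I expect that mere rerouting is not enough: one likely has to first show, via Lemma~\ref{lem2} applied to an appropriate degree-$2$ vertex combined with a second application of the Moreover clause, that the configuration itself cannot occur. The Moreover step is standard and should be routine; the $V_1(G)=\emptyset$ step is where the real content of the lemma lies.
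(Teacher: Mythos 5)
First, note that the paper does not prove this lemma at all: it is quoted from Fan and Wang \cite{FW}, so your argument can only be judged on its own merits. Your first stage (the ``Moreover'' clause) is correct and complete: any copy $H$ of $P_5+tP_2$ in $G+xw$ must use the new edge $xw$, the isolated vertex $w$ is then forced to be a degree-one neighbour of $x$ in $H$, and in both resulting configurations a neighbour $y\in N_G(x)\setminus V(H)$ would let you rebuild a copy of $P_5+tP_2$ inside $G$, contradicting saturation.

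The second stage, however, has a genuine gap, and it sits exactly where you flag trouble. In your first configuration ($uw$ a $P_2$-component of $H$), the only edge of $G$ available outside $H-\{u,w\}$ is $uy$, and $y$ already lies in $V(H)$ by the Moreover clause; attaching the pendant vertex $u$ at $y$ to a copy of $P_5+(t-1)P_2$ never yields $P_5+tP_2$, whatever the position of $y$: if $y$ is an endpoint of a $P_2$ you get $P_5+P_3+(t-2)P_2$, if $y$ is an endpoint of the $P_5$ you get $P_6+(t-1)P_2$, which has only $2t+4$ vertices and so cannot contain $P_5+tP_2$, and if $y$ is internal you gain nothing. So the surgery fails in every sub-case, not only the middle one, and the second configuration stalls as well, since $v_5$ need not have any neighbour outside $V(H)$ (it could itself be a degree-one vertex, making the argument circular). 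The missing idea is to add the edge $yw$ rather than $uw$: applying your own Moreover clause to $x=y$, any copy $H_y$ of $P_5+tP_2$ in $G+yw$ must contain both $u\in N_G(y)$ and $w$; each of $u$ and $w$ has $y$ as its unique neighbour in $G+yw$, hence each is a degree-one vertex of $H_y$ adjacent to $y$; but no vertex of $P_5+tP_2$ has two neighbours of degree one. This contradiction disposes of $V_1(G)=\emptyset$ in one stroke and avoids the case analysis entirely.
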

Using the same method as in Lemma \ref{lem3'}, we can get a more general result, which is the content in Lemma \ref{lem3}.
\begin{lem}\label{lem3}
Let $G$ be a $(P_k + tP_2)$-saturated graph with $k\geq 2$, $t\geq 1$. If $V_0(G)\neq\emptyset$, then $V_1(G) =\emptyset$. Moreover, for any $x \in V(G) \setminus V_0(G)$, we have
$$N_G[x]\cup \{w\}\subseteq V(H),$$
where $H$ is any copy of $P_k + tP_2$ in $G + xw$ and $w$ is a vertex in $V_0(G)$.
\end{lem}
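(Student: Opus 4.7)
The plan is to imitate the strategy of Lemma \ref{lem3'}: prove the \emph{moreover} clause first by a vertex-swap argument, and then apply it with the specific choice $x = u'$ to derive $V_1(G) = \emptyset$. Throughout fix a vertex $w \in V_0(G)$, which exists by hypothesis.

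For the moreover clause, fix $x \in V(G) \setminus V_0(G)$. Since $xw \notin E(G)$, saturation produces a copy $H$ of $P_k + tP_2$ in $G + xw$, and $H$ must use the new edge $xw$, so $x, w \in V(H)$. As $w$ has degree $1$ in $G + xw$, in $H$ the vertex $w$ is an endpoint of its component: either the $P_2$ consisting of the edge $xw$, or one end of the $P_k$ with $x$ as the second vertex. Suppose for contradiction that some $y \in N_G(x)$ lies outside $V(H)$. Remove $w$ from $H$ together with the edge $xw$ and insert $y$ in the vacated slot adjacent to $x$ via the edge $xy \in E(G)$. The resulting graph is isomorphic to $P_k + tP_2$, all its edges lie in $G$, and it has no vertex conflict with the remaining components of $H$ because $y \notin V(H)$. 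This contradicts $G$ containing no copy of $P_k + tP_2$, so $N_G[x] \cup \{w\} \subseteq V(H)$.

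To establish $V_1(G) = \emptyset$, suppose toward a contradiction that some $u \in V_1(G)$ has unique neighbor $u'$ in $G$. Applying the moreover clause with $x = u'$, any copy $H^*$ of $P_k + tP_2$ in $G + u'w$ satisfies $N_G[u'] \cup \{w\} \subseteq V(H^*)$, and so $u \in V(H^*)$. The only edge at $u$ in $G + u'w$ is $uu'$, and $u$ must be non-isolated in $H^*$, so $uu' \in E(H^*)$; likewise $u'w \in E(H^*)$, since otherwise $H^* \subseteq G$. Thus $u'$ has two distinct neighbors $u, w$ in $H^*$, and both $u$ and $w$ have $H^*$-degree exactly $1$ (each has at most one neighbor in $G + u'w$). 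Both $u$ and $w$ are therefore endpoints of the same component of $H^*$, and that component must be a path whose two endpoints are both adjacent to the interior vertex $u'$; the only such path is $P_3$ on $\{u, u', w\}$, contradicting that every component of $P_k + tP_2$ is a copy of $P_k$ or $P_2$.

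The step I expect to be subtlest is the vertex swap in the moreover clause: one must verify that replacing $w$ by $y$ in the single component of $H$ containing $xw$ preserves both the path/edge structure of that component and the disjointness from the remaining components (which is exactly where $y \notin V(H)$ enters). Once the moreover clause is in hand, the decisive move is to apply it with $x = u'$ rather than $x = u$, as this is what forces $u$ into $V(H^*)$ and collapses the surrounding structure into the forbidden $P_3$.
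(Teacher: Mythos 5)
Your argument is exactly the method the paper has in mind (the paper gives no proof of Lemma \ref{lem3}, deferring to the proof of Lemma \ref{lem3'} in \cite{FW}): prove the ``moreover'' clause by swapping $w$ for a neighbour $y\in N_G(x)\setminus V(H)$, then apply it with $x=u'$. The swap argument is correct for every $k\geq 2$: the component of $H$ containing $w$ is either the $P_2$ on $\{x,w\}$ or a $P_k$ with end $w$ and second vertex $x$, and since $y\notin V(H)$ the substitution yields a copy of $P_k+tP_2$ inside $G$, a contradiction. The deduction that $uu',u'w\in E(H^*)$ and that $u$ and $w$ both have degree one in $H^*$ is also sound.

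There is, however, one genuine gap relative to the statement as written. Your concluding step forces the component of $H^*$ containing $u'$ to be the path $u,u',w$, i.e.\ a $P_3$, and you declare this to contradict ``every component of $P_k+tP_2$ is a copy of $P_k$ or $P_2$.'' That is a contradiction when $k=2$ (no component of $(t+1)P_2$ has a degree-two vertex) and when $k\geq 4$, but not when $k=3$: there the forced component is a legitimate copy of $P_k$, and the argument terminates without contradiction. Since the lemma is asserted for all $k\geq 2$, the case $k=3$ is not covered; closing it would require a further analysis (e.g.\ using that the remaining components of $H^*$ form a $t$-edge matching of $G$ avoiding $\{u,u',w\}$ and that $N_G(u')\subseteq\{u\}\cup V(H^*)$, and then exhibiting a copy of $P_3+tP_2$ in $G$ itself). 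To be fair, this defect is inherited from the paper: the ``same method'' it cites has the identical blind spot at $k=3$, and every application of Lemma \ref{lem3} in the paper takes $k=6$, for which your proof is complete. Still, you should either supply the missing $k=3$ argument or restrict the statement you are proving to $k\neq 3$ (or at least flag the exception).
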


A \emph{book} $B_k$ consists of $k$ triangles sharing one edge. A \emph{k-fan} $F_k$ consists of $k$ triangles sharing one vertex. $G$ is \emph{$H$-free} means $G$ does not contain $H$ as a subgraph.
\begin{lem}\label{lem4}
Let $G$ be a connected graph of order $n\geq 6$ and $\delta(G)\geq 2$. If $G$ satisfies\\
(1) $G$ is $P_6$-free and $G$ contains $P_4$ as a subgraph, and\\
(2) if $d(x)=2$ and $N(x)=\{u,v\}$, then $uv\in E(G)$,\\
then $G=B_i$, $i\geq 4$ or $G=F_j$, $j\geq 3$ with $n$ odd.
\end{lem}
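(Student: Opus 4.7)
The plan is to identify $G$ by first locating a longest path in $G$, showing it must have exactly five vertices, and then propagating constraints from $\delta(G)\geq 2$, $P_6$-freeness, and condition~(2) to determine how every remaining vertex attaches.

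I would first show that every longest path in $G$ has exactly five vertices. Because $G$ contains $P_4$ but is $P_6$-free, the longest path has four or five vertices. Suppose for contradiction that the longest path is $P=v_1v_2v_3v_4$. By maximality, $N(v_1),N(v_4)\subseteq V(P)$; combined with $\delta(G)\geq 2$, either $v_1v_4\in E(G)$ (giving a $C_4$ on $V(P)$), or both $v_1v_3,v_2v_4\in E(G)$ (giving $K_4-v_1v_4$ on $V(P)$). Since $n\geq 6$ and $G$ is connected, some vertex $w\notin V(P)$ attaches to some $v_i$, and in either configuration a short inspection exhibits a $P_5$ through $w$, contradicting the maximality of $P$. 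So the longest path is $P=v_1v_2v_3v_4v_5$.

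Next, by maximality, $N(v_1),N(v_5)\subseteq V(P)$. If $v_1v_5\in E(G)$ then a $C_5$ appears on $V(P)$, and any vertex outside $V(P)$ (which exists because $n\geq 6$ and $G$ is connected) that attaches to any $v_i$ extends the cycle to a $P_6$; this rules out $v_1v_5$. Then $v_1$ has a chord-neighbour in $\{v_3,v_4\}$ and $v_5$ one in $\{v_2,v_3\}$, giving a small number of cases. In each case I would invoke condition~(2) to force any degree-$2$ vertex $x$ outside $P$ to lie on a triangle (its two neighbours must be adjacent), and I would use the various $P_5$'s visible in $G$ (for instance $v_5v_4v_1v_2v_3$ when $v_1v_4\in E(G)$) together with $P_6$-freeness to forbid any attachment that would spawn a $P_6$. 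This should funnel every outside vertex into a triangle either sharing a common edge with every other such triangle (the book pattern) or sharing only a common vertex with every other such triangle and having pairwise disjoint remaining vertices (the fan pattern).

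Reading off the global structure, the book pattern yields $G=B_i$ with $i\geq 4$ (so $n=i+2\geq 6$), while the fan pattern yields $G=F_j$ with $j\geq 3$ and $n=2j+1$ odd, as claimed. The main obstacle I anticipate is the chord case analysis in step two: several chord configurations must be collapsed onto the two target structures, and one must simultaneously track multiple $P_5$'s in $G$ to verify that no placement of a further vertex sneaks in a $P_6$. Condition~(2) is the linchpin, since it prevents pendant-style attachments and enforces the rigid triangle decomposition typical of $B_i$ and $F_j$.
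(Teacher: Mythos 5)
Your overall strategy coincides with the paper's: take a longest path, use $P_4\subseteq G$ and $P_6$-freeness to restrict its length to $4$ or $5$, eliminate length $4$, and then classify the length-$5$ case. Your elimination of the four-vertex longest path is sound and arguably cleaner than the paper's (you observe that $\delta(G)\geq 2$ forces a spanning $C_4$ on $V(P)$, after which any outside attachment yields a $P_5$; the paper instead chases a second neighbour of an outside vertex $x$). Likewise your exclusion of the chord $v_1v_5$ in the five-vertex case is correct.

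The genuine gap is that the classification step --- the entire substance of the lemma --- is asserted rather than proved. You write that the chord cases ``should funnel every outside vertex into a triangle'' forming either the book or the fan pattern, but you never exhibit the mechanism that separates the two outcomes. The paper's proof pivots on a concrete dichotomy for an outside vertex $x$ (which exists since $n\geq 6$ and $G$ is connected, and which by maximality of $P$ can only attach to $x_2$, $x_3$ or $x_4$): if $x$ is adjacent to $x_2$ or $x_4$, one deduces $N(x)=\{x_2,x_4\}$, then $d(x_3)=2$, then $x_2x_4\in E(G)$ by condition (2), then $N(x_1)=N(x_5)=\{x_2,x_4\}$, and finally that every further outside vertex has the same neighbourhood, giving $B_i$; if instead $x$ is adjacent only to $x_3$, a parallel chain gives $F_j$. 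Your plan of casing first on the chords at $v_1$ and $v_5$ produces more configurations (e.g.\ $v_1v_4$ and $v_2v_5$ both present) whose elimination is not routine --- in that configuration $w,v_3,v_4,v_1,v_2,v_5$ is already a $P_6$ for any outside neighbour $w$ of $v_3$, and ruling out the remaining attachments requires tracking several paths simultaneously. Until that case analysis is actually executed, the proof is incomplete: nothing in your sketch yet shows why exactly the two structures $B_i$ ($i\geq 4$) and $F_j$ ($j\geq 3$, $n$ odd) survive and no hybrid does.
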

\begin{proof}
Select a longest path $P$ in $G$, say $P=x_1,x_2,\ldots,x_k$. As $G$ satisfies condition (1), we have $4\leq k <6$. It is easily verified that there exists $x\notin V(P)$, and $N(x) \cap V(P)\neq  \emptyset$,  $N(x) \cap \{x_1,x_k\}= \emptyset$. We distinguish two cases.

{\it Case 1.} $k=4$.

Observe that if $|N(x) \cap \{x_2,x_3\}|=2$, then $G$ contains a path $x_1,x_2,x,x_3,x_4$, contradicting the fact that $P$ is a longest path. We conclude that, $|N(x)\cap \{x_2,x_3\}|=1$. Because of the symmetry of $x_2$ and $x_3$, suppose $x$ is adjacent to $x_2$. Since $\delta(G)\geq 2$, there is one vertex $y\in N(x)$ and $y\notin V(P)$. Thus $G$ contains a path $y,x,x_2,x_3,x_4$, contradicting $k=4$.

{\it Case 2.} $k=5$.

If $x$ is adjacent to $x_2$ or $x_4$, we assert that $N(x)\cap (V(G) \setminus V(P))=\emptyset$ and $x_3\notin N(x)$. Otherwise, $G$ contains a path with length at least 5, contradicting $k=5$. Since $\delta(G)\geq 2$, then $d(x)=2$ and $N(x)=\{x_2, x_4\}$. If $d(x_3)>2$, $y\in N(x_3) \setminus \{x_2,x_4\}$ (possibly $y=x_1$ or $y=x_5$), $G$ contains a path $y,x_3,x_2,x,x_4,x_5$ or $y,x_3,x_4,x,x_2,x_1$, contradicting the fact that $P$ is a longest path. Thus $d(x_3)=2$ and $N(x_3)=\{x_2, x_4\}$. As $G$ satisfies condition (2), $x_2$ is adjacent to $x_4$. Clearly, $N(x_1),N(x_5)\subseteq V(P)$. Since $\delta(G)\geq 2$, then $N(x_1)=\{x_2, x_4\}$ and $N(x_5)=\{x_2, x_4\}$. Hence $G[x_1,x_2,x_3,x_4,x_5,x]=B_4$. For any vertex $y\in V(G) \setminus (V(P)\cup \{x\})$, $y$ is adjacent to $x_2$ or $x_4$. Using the same method, we have $d(y)=2$ and $N(y)=\{x_2, x_4\}$. Hence $G=B_i$, $i\geq 4$.

If $x$ is adjacent to $x_3$, it is easy to check that $x$ is not adjacent to $x_2$ or $x_4$. Thus there is a vertex $y\in N(x)$ and $y\notin V(P)$. Note that $P$ is not a longest path if $N(y)\neq \{x,x_3\}$. If $x_1$ is adjacent to $x_4$, $G$ contains a path $x_4,x_1,x_2,x_3,x,y$, contradicting $k=5$. Thus $d(x_1)=2$ and $N(x_1)=\{x_2, x_3\}$. Similarly, $d(x_5)=2$ and $N(x_5)=\{x_3, x_4\}$. Now we consider the degrees of vertices $x,x_2$ and $x_4$. If any vertex of $\{x,x_2,x_4\}$ has degree more than two, $G$ has a path with length at least 5. Hence, $G[x_1,x_2,x_3,x_4,x_5,x,y]=F_3$. For any vertex $z\in V(G) \setminus (V(P)\cup \{x,y\})$, $z$ is adjacent to $x_3$. Using the same method, we have $G=F_i$, $i\geq 3$ with $n$ odd. This completes the proof of Lemma \ref{lem4}.
\end{proof}

\begin{theo}\label{lem5}
Let $G\in SAT(n, P_6  +  tP_2)$ and $Q=Q_1+Q_2+\cdots+Q_k$, where $Q_1,\ldots,Q_k$ are all the nontrivial components of $G$. If $|Q|\geq 2t+6, \delta(Q)\geq 2$, $|Q_i|\geq 6$ and $Q_i$ is not a book or fan, $1\leq i \leq k$, then\\
(1) $G \in SAT(n, P_4 + (t+1)P_2)$,\\
(2) if $V_0(G)\neq \emptyset$, then $|E(G)|>3t+18$.
\end{theo}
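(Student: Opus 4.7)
For part (1), the claim that $G+e\supseteq P_4+(t+1)P_2$ for every non-edge $e$ of $G$ is immediate: since $G\in SAT(n,P_6+tP_2)$, $G+e$ already contains $P_6+tP_2$, and $P_6\supseteq P_4+P_2$ (take the first four and last two vertices of the path), so $P_6+tP_2\supseteq P_4+(t+1)P_2$. The substantive content of (1) is to show $G\not\supseteq P_4+(t+1)P_2$. I would argue by contradiction: assume such a copy exists, with path $P=u_1u_2u_3u_4$ lying in a component $Q_1$ and matching $M=\{x_iy_i\}_{i=1}^{t+1}$. A preliminary step is that each $Q_i$ contains $P_6$: being connected on $\geq 6$ vertices with $\delta\geq 2$ it contains $P_4$, Lemma~\ref{lem2} supplies condition (2) of Lemma~\ref{lem4}, and since $Q_i$ is neither a book nor a fan, the contrapositive of Lemma~\ref{lem4} forces $Q_i\supseteq P_6$.

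Using $\delta(Q)\geq 2$, I then choose $a\in N_G(u_1)\setminus\{u_2\}$ and $b\in N_G(u_4)\setminus\{u_3\}$. The benign case is $a,b\notin V(P)\cup V(M)$ and $a\neq b$, where $au_1u_2u_3u_4b$ together with any $t$ edges of $M$ is a $P_6+tP_2$ in $G$, contradicting $G\in SAT(n,P_6+tP_2)$. When $\{a,b\}$ meets $V(M)$ only within a single pair $\{x_i,y_i\}$, the same argument retains $t$ disjoint matching edges. The main obstacle is the case where every admissible pair $(a,b)$ must fall into two \emph{distinct} matching edges, draining two edges from $M$; here I plan to use Lemma~\ref{lem2} at the forced degree-$2$ vertices around $u_1,u_4$ to produce triangle structure and then redirect via the guaranteed $P_6$ inside $Q_1$ so that the extended path meets $V(M)$ in at most one edge. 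This redirection is where most of the case analysis lives.

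For (2), part (1) gives $G\in SAT(n,P_4+(t+1)P_2)$, and with $V_0(G)\neq\emptyset$, Lemma~\ref{lem3} (applied with $k=4$ and matching count $t+1$) yields $V_1(G)=\emptyset$ and $N_G[x]\cup\{w\}\subseteq V(H)$ for every $x\in Q$ and $w\in V_0(G)$, where $H$ is any copy of $P_4+(t+1)P_2$ in $G+xw$. Combined with $|V(H)|=2t+6$ this gives $d_G(x)\leq 2t+4$ and confines $N_G(x)$ to a specific saturated configuration. The remaining step is an edge count on $Q$. The extremal graph $K_7+(t-1)K_3+\overline{K}_{n-3t-4}$ realising $3t+18$ edges is already ruled out by the hypothesis $|Q_i|\geq 6$, since its $K_3$ components have only three vertices. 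Using $\delta(Q)\geq 2$ together with Lemma~\ref{lem2}—which forbids $Q_i$ from being a cycle, because in $C_n$ with $n\geq 4$ the two neighbours of a degree-$2$ vertex are non-adjacent—I obtain $|E(Q_i)|\geq|Q_i|+1$, and combining this with the neighbourhood constraint from Lemma~\ref{lem3} should upgrade the per-component bound enough to yield $|E(G)|=|E(Q)|>3t+18$ when summed with $|Q|\geq 2t+6$. The main obstacle is sharpening the per-component lower bound, since $|E(Q_i)|\geq|Q_i|+1$ by itself gives only $|E(Q)|\geq|Q|+k$, which is insufficient when $|Q|$ is near $2t+6$ and $k$ is small; I expect to handle the minimum-size case $|Q_i|=6$ by a separate structural analysis, using the pinning provided by Lemma~\ref{lem3} on the closed neighbourhoods of vertices in $Q_i$.
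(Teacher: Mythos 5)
Both halves of your proposal stop short at exactly the points where the real work lies. In part (1) your opening matches the paper: $G+e\supseteq P_6+tP_2\supseteq P_4+(t+1)P_2$ for every non-edge $e$, and each $Q_i$ contains $P_6$ by Lemma~\ref{lem2} together with the contrapositive of Lemma~\ref{lem4}. But your plan for refuting a copy of $P_4+(t+1)P_2$ --- extending the path $u_1u_2u_3u_4$ by neighbours $a,b$ of its endpoints --- is left unresolved precisely in the case that matters, namely when every admissible choice of $a,b$ consumes two distinct edges of $M$ and leaves only $t-1$ of the $t+1$ matching edges; the degenerate possibilities $a=b$ and $a,b\in V(P)$ are also untreated. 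The paper does not extend the $P_4$ at all: having placed the $P_4$ in a component $Q_1$ and produced a $P_6$ inside $Q_1$, it concludes directly that $G\supseteq P_6+tP_2$. So what you offer for (1) is a sketch with its central case analysis missing, not a proof.

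Part (2) is where your route genuinely diverges, and it diverges onto a path that cannot reach the target. You propose a per-component edge count starting from $|E(Q_i)|\geq|Q_i|+1$, which, as you concede, gives only $|E(Q)|\geq|Q|+k\geq 2t+7$; to beat $3t+18$ with $|Q|$ possibly equal to $2t+6$ you would need average degree strictly above $3$ in $Q$, and nothing in $\delta(Q)\geq2$, Lemma~\ref{lem2}, or the degree \emph{upper} bound $d(x)\leq 2t+4$ from Lemma~\ref{lem3} supplies that. The paper's argument is not a count at all. Assuming $|E(G)|\leq 3t+18$, it first pins down $\alpha'(Q)=t+2$ exactly (at least $t+2$ from $(P_4+(t+1)P_2)$-saturation; not more, since $\delta(Q)\geq2$ and $|Q_i|\geq6$ would then produce a copy of $P_4+(t+1)P_2$ in $Q$). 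It then takes a Berge--Tutte witness set $Y$ from Lemma~\ref{lem1} with $t+2=\frac{1}{2}(|Q|+|Y|-o(Q-Y))$, shows that $Q[Y\cup V(Q_i')]$ is complete for every component $Q_i'$ of $Q-Y$ (a missing edge could be added without changing the Berge--Tutte bound, contradicting saturation), shows $Y\neq\emptyset$ (otherwise all components are cliques of order at least $6$ and a short count against $|Q|\geq 2t+6$, $|E(Q)|\leq 3t+18$ forces $t=1$, $|Q|=8$, $6\mid 8$), and finally contradicts Lemma~\ref{lem3}: for $x\in Y$ and $w\in V_0(G)$, completeness gives $N_Q[x]=V(Q)$, so $|N_Q[x]\cup\{w\}|=|Q|+1\geq 2t+7$, exceeding the $2t+6$ vertices of any copy of $P_6+tP_2$ in $G+xw$. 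This structural fact --- that a vertex of the Berge--Tutte witness set is adjacent to all of $Q$ --- is the idea your proposal is missing.
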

\begin{proof}
(1) Since $G\in SAT(n, P_6  +  tP_2)$, $G + e$ contains $P_6 + tP_2$ for any edge $e\in E(\overline{G})$. It follows that $G + e$ contains $P_4 + (t+1)P_2$ for any edge $e\in E(\overline{G})$.

If $G\notin SAT(n,P_4 + (t+1)P_2)$, then $G$ contains $P_4 + (t+1)P_2$. Without loss of generality, suppose that $Q_1$ contains $P_4$ as a subgraph. Since $|Q_1| \geq 6, \delta(Q)\geq 2$ and $Q_1$ is not a book or fan, by Lemma \ref{lem2} and Lemma \ref{lem4}, there exists $P_6$ in $Q_1$. Hence, $G$ contains a copy of $P_6 + tP_2$, a contradiction.

(2) Suppose that $|E(G)|\leq 3t+18$. By (1), we have $Q\in SAT(n, P_4 + (t+1)P_2)$. Then, $\alpha'(Q) \geq t + 2$. If $\alpha'(Q) \geq t+3$, $G$ must contain a copy of $(t+3)P_2$. Since $\delta(Q)\geq 2$ and $|Q_i|\geq 6 (1\leq i \leq k)$, it is clearly that $Q$ has a copy of $P_4 + (t + 1)P_2$, which contradicts $Q\in SAT(n, P_4 + (t+1)P_2)$. So, we have $\alpha'(Q)=t + 2$. By Lemma \ref{lem1}, we have
$$ t + 2 = \frac{1}{2} min\{|Q| + |X|- o(Q - X): X \subseteq V(Q)\}. $$
Choose a subset $Y \subseteq V(Q)$ such that
$$ t + 2 = \frac{1}{2}(|Q| + |Y| - o(Q - Y)).$$
Let $Q-Y=Q'_1+Q'_2+\cdots+Q'_p$. We have two claims.

{\it Claim 1.} $Q[Y\cup V(Q'_i)]$ is a complete graph for $i\in \{1,2,\ldots,p\}$.

To the contrary, suppose that there exist two vertices $u,v\in Y\cup V(Q'_i)$ such that $uv\notin E(Q)$. Let $Q'=Q+uv$. Since $Q$ is $(P_4 + (t+1)P_2)$-saturated, $\alpha'(Q')\geq t+3$. On the other hand, observe that $|Q'| = |Q|$ and $o(Q'-Y) = o(Q - Y)$. By Lemma \ref{lem1}, we have
$$ \alpha'(Q')\leq t + 2 = \frac{1}{2}(|Q'| + |Y| - o(Q' - Y)),$$ a contradiction.

{\it Claim 2.} $Y\neq \emptyset$.

Suppose that $Y = \emptyset$. By Claim 1, $Q'_1, \ldots, Q'_p$ are all complete graphs of order at least 6. Hence, $\delta(Q) \geq 5$ and
$$2|E(Q)| =\sum_{x\in V(Q)}d_Q(x)=\sum_{j=1}^{p}|Q'_j||Q'_j-1|\geq 5|Q|+|Q'_i||Q'_i-6|, 1 \leq i \leq p.$$
Since $|Q| \geq 2t +6$ and $|E(Q)| = |E(G)| \leq 3t +18$, we have $|Q| = 2t + 6$, $t = 1$ and $|Q'_i| = 6$ for $1 \leq i \leq p$. Thus, $8 = |Q| =6p$, a contradiction. This completes the proof of Claim 2.

Let $x\in Y$ and $w\in V_0(G)$. By Lemma \ref{lem3}, we have $N_Q[x]\cup \{w\} \subseteq V(H)$, where $H$ is a copy of $P_6 + tP_2$ in $G+xw$. Hence $|N_Q[x]\cup \{w\}| \leq |V(H)| = 2t+6$. On the other hand, By Claim 1, $|N_Q[x]\cup \{w\}|= |Q|+1\geq 2t+6+1=2t+7$, a contradiction. This completes the proof of Theorem \ref{lem5}.
\end{proof}

\begin{theo}\label{lem6}
Let $G \in SAT(n, P_6 + tP_2)$ with $n \geq 3t+6$. If $|V_0(G)| \geq 2$ and
$|E(G)| \leq 3t +18$, then $|E(G)| = 3t +18$ and $G=K_7 + (t-1)K_3 + \overline{K}_{n-3t-4}$.
\end{theo}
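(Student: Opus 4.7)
The plan is to combine Lemma~\ref{lem3} with a structural analysis of the nontrivial components of $G$ to pin each one down.

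Let $Q=Q_1+\cdots+Q_k$ be the union of the nontrivial components of $G$. Since $|V_0(G)|\geq 2$, Lemma~\ref{lem3} gives $V_1(G)=\emptyset$ (so $\delta(Q)\geq 2$) and the inclusion $N_G[x]\cup\{w\}\subseteq V(H)$ for every $x\in V(Q)$, $w\in V_0(G)$, and every copy $H$ of $P_6+tP_2$ in $G+xw$. The first consequence is that $G\supseteq P_6+(t-1)P_2$: pick two isolated vertices $w_1,w_2$ and apply saturation to $w_1w_2$; since $w_1,w_2$ have $G$-degree $0$ they cannot sit at an internal vertex of any path, so $w_1w_2$ must serve as one of the $P_2$'s in the copy of $P_6+tP_2$ in $G+w_1w_2$. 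Relabel so $Q_1$ contains the $P_6$.

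The crux is to show $Q_1=K_7$. If $|Q_1|\geq 8$, a short path-rerouting argument using $\delta(Q_1)\geq 2$ and the $P_6$ in $Q_1$ produces a disjoint $P_6+P_2$ inside $Q_1$; together with the $(t-1)$ $P_2$'s guaranteed in the remaining components, this gives $P_6+tP_2\subseteq G$, contradicting saturation. Hence $|Q_1|\in\{6,7\}$. To eliminate $|Q_1|=6$ and $|Q_1|=7$ with $Q_1\neq K_7$, take a non-edge $f$ inside $Q_1$ (or, when $Q_1=K_6$ has no internal non-edge, the pendant edge $xw$ from $x\in V(Q_1)$ to an isolated $w$). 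Because $Q_1$ is the only component containing a $P_6$, any $P_6$ in $G+f$ must use all but at most one vertex of $V(Q_1)$, leaving no $Q_1$-vertices to form an extra $P_2$; the remaining components supply only $(t-1)$ disjoint edges, so $G+f\not\supseteq P_6+tP_2$, contradicting saturation. Therefore $Q_1=K_7$.

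Next, each $Q_j$ with $j\geq 2$ must equal $K_3$. A second $P_6$ in some $Q_j$ would supply three additional disjoint edges in $Q_j$, which together with the $P_6$ in $Q_1$ and enough $P_2$'s from the rest of $G$ would force $P_6+tP_2\subseteq G$; so every $Q_j$ with $j\geq 2$ is $P_6$-free. Lemma~\ref{lem2} applies to $G$ because $G$ is saturated, so by Lemma~\ref{lem4} any $Q_j$ with $|Q_j|\geq 6$ is a book $B_i$ or a fan $F_i$. A matching-and-counting computation shows that attaching such a $Q_j$ alongside $K_7$ either over-spends the edge budget $|E(G)|\leq 3t+18$ or admits a non-edge $f$ whose addition still fails to yield $P_6+tP_2$ in $G+f$. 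The remaining $|Q_j|\in\{4,5\}$ cases with $\delta\geq 2$ (namely $C_4$, $K_4-e$, $K_4$, and the $5$-vertex graphs with $\delta\geq 2$, all of which have matching exactly $2$) succumb to the same style of argument. Only $Q_j=K_3$ survives.

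Finally, for the counting: since $G\supseteq P_6+(t-1)P_2$ with the $P_6$ inside $K_7=Q_1$, there must be at least $t-1$ triangles among $Q_2,\ldots,Q_k$; conversely $K_7+tK_3\supseteq P_6+tP_2$ (use $P_6$ on six $K_7$-vertices together with one edge from each of the $t$ triangles), so saturation forbids $t$ or more triangles. Therefore $k-1=t-1$, giving $|V(Q)|=7+3(t-1)=3t+4$, $|V_0(G)|=n-3t-4$, and $|E(G)|=21+3(t-1)=3t+18$, so $G=K_7+(t-1)K_3+\overline{K}_{n-3t-4}$ as required. The main obstacle is pinning down $Q_1=K_7$: both the upper bound $|Q_1|\leq 7$ (via path-rerouting) and the elimination of the cases $|Q_1|=6$ and $|Q_1|=7$ with $Q_1\neq K_7$ (via a carefully chosen breaking non-edge) demand the most care.
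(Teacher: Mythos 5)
Your outline lands on the right final structure, but the step that carries all the weight --- bounding the $P_6$-component --- is asserted rather than proved, and the assertion is false as stated. You claim that if $|Q_1|\geq 8$ then ``a short path-rerouting argument using $\delta(Q_1)\geq 2$ and the $P_6$ in $Q_1$ produces a disjoint $P_6+P_2$ inside $Q_1$.'' This implication does not hold: take hub vertices $a,b$ joined to each other and to $c_1,\dots,c_5$, then add the edge $c_1c_2$ and a vertex $d$ joined to $c_1$ and $c_2$. This graph has $8$ vertices, minimum degree $2$, satisfies the degree-$2$ condition of Lemma~\ref{lem2}, and contains the path $d,c_1,a,c_3,b,c_4$, yet a short case check on which two vertices could host the extra $P_2$ shows it contains no $P_6+P_2$. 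So ruling out $|Q_1|\geq 8$ needs more than $\delta\geq 2$ plus the existence of a $P_6$; the paper obtains the bound $|Q'|\leq 2t'+5$ from Theorem~\ref{lem5}, whose proof runs through the Berge--Tutte formula (Lemma~\ref{lem1}) together with the classification of the relevant $P_6$-free components as books and fans (Lemma~\ref{lem4}). That machinery is the technical heart of the theorem, and your proposal replaces it with a one-line appeal to rerouting.

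The second half is also materially incomplete. The elimination of books, fans, and the $4$- and $5$-vertex components is compressed into ``a matching-and-counting computation shows\dots'' and ``succumb to the same style of argument,'' with no computation given; in the paper these cases are disposed of by two different mechanisms (the tightness of the count $|E(G')|=3t'+18$ forces $t_5=0$ and $|B|=0$, while a separate saturation argument --- adding a suitable edge does not increase the number of disjoint $P_2$'s --- handles the $K_4$'s and the fans), and conflating them hides real work. There is also an ordering problem: your argument that $Q_1=K_7$ already assumes that ``$Q_1$ is the only component containing a $P_6$'' and that ``the remaining components supply only $(t-1)$ disjoint edges,'' facts you only attempt to justify afterwards. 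The overall plan is salvageable, but as written the proof has a genuine gap at its central step.
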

\begin{proof}
Since $|V_0(G)| \geq 2$, by Lemma \ref{lem3}, $V_1(G) = \emptyset$. Note that all the components of order 3, 4 or 5 in $G$ are complete. Let
$$G =G' + t_3K_3 + t_4K_4 + t_5K_5 + B + F,$$
where $t_k$ is the number of components of $G$ with order $k$, $k\in \{3,4,5\}$, $B$ is the graph consists of all the components $B_i$, $i\geq 4$, and $F$ is the graph consists of all the components $F_j$, $j\geq 3$. We denote $B_c$ and $F_c$ are the number of $B_i$, $i\geq 4$ and $F_j$, $j\geq 3$, respectively. Since $|B_i|\geq 6$, we have $|B|\geq 6B_c$.

Clearly $|V_0(G')| =|V_0(G)|\geq 2$. Note that joining two isolated vertices in $V_0(G')$ in $G$, we have a copy of $P_6 + tP_2$. Thus, $G'$ contains $P_6$. As $G \in SAT(n, P_6 + tP_2)$, we have $t_3 + 2t_4+2t_5+2B_c+(|F|-F_c)/2 \leq t - 1$. Let $t' = t-t_3-2t_4-2t_5-2B_c-(|F|-F_c)/2$. Then, $t' \geq 1$. Since $G \in SAT(n, P_6 + tP_2)$, we have $G' \in SAT(n', P_6 + t'P_2)$, where $n' = n-3t_3-4t_4-5t_5-|B|-|F|$.

Consider the graph $Q'$ obtained from $G'$ by deleting all trivial components. Clearly, every component of $Q'$ has order at least 6 and is not a book or fan. Note that $\delta(Q') \geq 2$ and $G'\in SAT(n,P_6 + t'P_2)$ with $V_0(G') \neq \emptyset$. Since $$|E(G')| =|E(G)|-3t_3-6t_4-10t_5-(2|B|-3B_c)-3((|F|-F_c)/2)$$
$$\leq 3t'+18-4t_5-(2|B|-9B_c)\leq 3t' +18,$$
by Theorem \ref{lem5}, we have $|Q'| \leq 2t' +5$. Note that joining two non-adjacent vertices in $Q'$, there is no copy of $P_6 + t'P_2$ in $G'$. Then $Q'$ is a complete graph. As $|V_0(G')|\neq \emptyset$, $|Q'| \geq 2t' +5$ and hence $Q'=K_{2t'+5}$. Moreover, $|E(Q')| = |E(G')| \leq 3t' + 18$. It follows that $t' = 1$ and $Q'=K_7$.

Since $G'=K_7 + (n'-7)K_1$ with $|E(G')| = 3t' + 18$, we have $t_5=0$ and $|B|=0$. Consequently
$$G =K_7 + (n'-7)K_1 + t_3K_3 + t_4K_4 + F.$$
Note that $G$ contains $P_6$. It is easy to verify that if $t_4 > 0$, joining the vertices in $K_4$ with the vertices in $K_7$ does not increase the number of $P_2$ in $G$. Similarly, if $|F|>0$, joining two non-adjacent vertices in $F_j$, $j\geq 3$ also does not increase the number of $P_2$ in $G$. Therefore, $t_4=0, |F|=0$ and $t_3 = t-1$. Hence $G=K_7 + (t-1)K_3 + \overline{K}_{n-3t-4}$. This completes the proof of Theorem \ref{lem6}.
\end{proof}

So far, we have proved that when $n \geq 3t+6$ and $|V_0(G)| \geq 2$, $sat(n, P_6 + tP_2)=3t +18$ and $\underline{SAT}(n, P_6 + tP_2)=\{K_7 + (t-1)K_3 + \overline{K}_{n-3t-4}\}.$
\section{Proof of Theorem \ref{th4}}
For a graph $H$, using the definition and notation in \cite{FW}, $SAT^{\ast}(n, H)$ and $sat^{\ast}(n, H)$ denote the set of $H$-saturated graphs $G$ of order $n$ with $|V_0(G)| = 0$ and the minimum number of edges in a graph in $SAT^{\ast}(n, H)$.

Let $T$ be the tree of order 10 as shown in Figure 1. Let $T^{\ast}$ be the tree of order $n=10+r$, $0\leq r\leq 9$, obtained from $S_{4+\lfloor\frac{r}{3}\rfloor}$ by attaching two leaves to each of the $2+\lfloor\frac{r}{3}\rfloor$ leaves of $S_{4+\lfloor\frac{r}{3}\rfloor}$ and attaching  $n-(4+\lfloor\frac{r}{3}\rfloor)-2(2+\lfloor\frac{r}{3}\rfloor)$ leaves to the remaining leaf of $S_{4+\lfloor\frac{r}{3}\rfloor}$.
\begin{center}
 \begin{tikzpicture}[>=stealth]
\draw(0,0)node[scale=1]{$\bullet$};
\draw(-1.2,-1)node[scale=1]{$\bullet$};
\draw(0,-1)node[scale=1]{$\bullet$};
\draw(1.2,-1)node[scale=1]{$\bullet$};
\draw[line width=0.6] (0,0)--(-1.2,-1);
\draw[line width=0.6] (0,0)--(0,-1);
\draw[line width=0.6] (0,0)--(1.2,-1);
\draw(0.3,-2)node[scale=1]{$\bullet$};
\draw(0.9,-2)node[scale=1]{$\bullet$};
\draw(1.5,-2)node[scale=1]{$\bullet$};
\draw(-0.3,-2)node[scale=1]{$\bullet$};
\draw(-0.9,-2)node[scale=1]{$\bullet$};
\draw(-1.5,-2)node[scale=1]{$\bullet$};
\draw[line width=0.6] (-1.2,-1)--(-0.9,-2);
\draw[line width=0.6] (-1.2,-1)--(-1.5,-2);
\draw[line width=0.6] (0,-1)--(-0.3,-2);
\draw[line width=0.6] (0,-1)--(0.3,-2);
\draw[line width=0.6] (1.2,-1)--(0.9,-2);
\draw[line width=0.6] (1.2,-1)--(1.5,-2);
\end{tikzpicture}

Figure 1. $T$
\end{center}

\begin{lem}\label{lem7}
Let $G$ be a $(P_6 + tP_2)$-saturated graph. If $T_1$ and $T_2$ are tree components of $G$, then $|T_1|\geq 10$, $|T_2|\geq 10$ and at least one of $T_1$ and $T_2$ contains $T$ as a subgraph.
\end{lem}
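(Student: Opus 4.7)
The plan is to show that each tree component $T_i$ is a $P_6$-saturated tree on its own vertex set, and then to invoke the structural classification of $P_6$-saturated trees to derive both the size bound and the containment of $T$.

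As a preliminary, I would check that both $T_1$ and $T_2$ carry at least one edge. If one of them were a single isolated vertex, then $V_0(G)\neq\emptyset$ and Lemma \ref{lem3} would give $V_1(G)=\emptyset$, which is incompatible with the other nontrivial tree component having leaves; the degenerate case of two isolated tree components falls into the hypothesis of Theorem \ref{lem6}, whose extremal graph has no further tree structure beyond isolated vertices.

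Next, I would verify that for every internal non-edge $e=uv$ of $T_i$, the graph $T_i+e$ contains $P_6$, so that $T_i$ itself is $P_6$-saturated. Saturation of $G$ yields $G+e\supseteq P_6+tP_2$; either $e$ lies in the $P_6$---in which case this $P_6$ is entirely inside $T_i+e$ and we are done---or $e$ lies in the matching part, meaning $G$ contains a copy of $P_6+(t-1)P_2$ disjoint from $\{u,v\}$. In the latter case I would combine this with the saturation applied to a second non-edge of $T_i$ disjoint from $\{u,v\}$, or to a non-edge inside the other tree component $T_j$, to locate an edge of $G$ that can serve as the missing $P_2$ while avoiding the existing $P_6+(t-1)P_2$; this would produce $P_6+tP_2$ inside $G$ itself, contradicting $(P_6+tP_2)$-freeness.

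Finally, a $P_6$-saturated tree must have diameter exactly $4$: at most $4$ by $P_6$-freeness, and at least $4$ because otherwise no single edge addition can extend a path to $5$ edges. Taking a center $c$ of $T_i$ and calling its neighbors with further leaves the middles, the critical non-edges---the edge from the unique leaf of a middle to $c$, and a cross-branch edge when only two middles are present---force each middle to carry at least $2$ leaves and force the number of middles to be at least $3$. This embeds a copy of $T$ (the center plus three middles each with two leaves) inside $T_i$, and in particular gives $|T_i|\geq 1+3+6=10$.

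The main obstacle is the matching subcase of the second step. Ruling out the possibility that $e$ lands in the matching part of a $P_6+tP_2\subseteq G+e$ for every choice of internal non-edge $e$ requires a spare-edge argument using the tree structure of $T_i$ and $T_j$, perhaps combined with the Berge--Tutte formula (Lemma \ref{lem1}) to track matching deficiencies in $G$ minus a putative $P_6$; this is where the proof is most delicate.
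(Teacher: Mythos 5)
Your plan reduces the lemma to a classification of $P_6$-saturated trees, which is genuinely different from the paper's route (the paper never shows the components are $P_6$-saturated on their own; it adds the single edge $u_1u_2$ between the neighbors of leaves of $T_1$ and $T_2$, forces that edge into the $P_6$ of the resulting copy $H$, and then repeatedly swaps edges of the matching $M\subseteq H$ for pendant edges such as $u_1v_1$, $u_2v_2$, $yz$ to build contradictions). The classification idea is attractive, and your step 3 (a diameter-$4$ $P_6$-saturated tree has at least three ``middles'', each with at least two leaves, hence contains $T$ and has order at least $10$) is essentially correct. But the reduction itself is where the proof lives, and it is not established.

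Concretely, two things are missing. First, you never argue that $T_i$ is $P_6$-free; without that, the diameter bound and the whole classification are unavailable, and $P_6$-freeness of a single component does not follow directly from $(P_6+tP_2)$-freeness of $G$ (a $P_6$ inside $T_i$ only yields a contradiction if one can exhibit $t$ independent edges of $G$ avoiding it, which is exactly the kind of bookkeeping the paper does by carrying the matching $M$ along). Second, and more seriously, the case in which the added non-edge $e=uv\subseteq V(T_i)$ lands in the matching part of $H$ is not resolved: you then have $P_6+(t-1)P_2$ in $G$ avoiding $u,v$, and you need one more independent edge. If $u$ and $v$ are both leaves whose unique neighbors are already saturated by $M$, no local swap is available, and your proposed fixes (``apply saturation to a second non-edge'', ``perhaps Berge--Tutte'') are plans rather than arguments; you flag this yourself as the delicate point, but it is precisely the content of the lemma. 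A warning sign that the reduction may even be false as stated: your approach would prove that \emph{both} trees contain $T$, whereas the paper's careful analysis only obtains ``$T_2$ contains $T$ or $|T_2|\geq 10$'' for the second tree, i.e.\ the paper's own proof leaves room for a tree component that is not a standalone $P_6$-saturated tree. (A small further slip: your justification that the diameter is at least $4$ --- ``otherwise no single edge addition can extend a path to $5$ edges'' --- is not literally true, since a cross edge between leaves of a double star does create a $P_6$; the correct point is that \emph{some} non-edge, namely one joining two leaves of the same center, fails.)
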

\begin{proof}
Let $v_i$ be a leaf of $T_i$ with $N(v_i) = \{u_i\}$, $i\in \{1, 2\}$. Since $G$ is $(P_6 + tP_2)$-saturated, $G + u_1u_2$ contains a copy of $P_6 + tP_2$. Let $H$ be the copy. If $u_1u_2$ is not in the $P_6$ of $H$, then $H-u_1u_2+u_1v_1$ is a copy of $P_6 + tP_2$ in $G$, contrary to $G$ is $(P_6 + tP_2)$-saturated. Thus $u_1u_2$ is in $P_6$ of $H$. It follows that $T_1 + T_2$ contains $P_4$ starting from $u_i$ for some $i = 1$ or 2 or  $T_1 + T_2$ contains $P_3$ starting from $u_i$ for $i = 1$ and $i = 2$. Now we discuss these two cases separately.

{\it Case 1.} $T_1 + T_2$ contains $P_4$ starting from $u_i$ for some $i = 1$ or 2.

Without loss of generality, assume $P_4= u_1,x,y,z$. Clearly $T_1[\{v_1,u_1,x,y,z\}]$ contains $P_5$. Let $M$ be the copy of $tP_2$ in $H$. Note that any vertex of $\{u_1,v_1,u_2,v_2,x,y,z\}$ is not in $M$. As $T_1$ is tree, by Lemma \ref{lem2}, $T_1$ has no vertex of degree 2. So, $u_1$, $x$ and $y$ all have neighbors not in $\{v_1,u_1,x,y,z\}$. Now we show that for any vertex $u'_1\in N(u_1) \setminus \{v_1,x\}$, $d(u'_1)=1$. If $d(u'_1)>1$ and $u'_1\in V(M)$. Then $u'_1$ has a neighbor $u''_1$ such that $u'_1u''_1$ belongs to $M$. Clearly, $T_1[\{u''_1,u'_1,u_1,x,y,z\}]$ contains $P_6$. Observe that $tP_2$ in $M- u'_1u''_1+ u_2v_2$. Hence $G$ contains $P_6 + tP_2$, a contradiction. If $d(u'_1)>1$ and $u'_1 \notin V(M)$, we also have $G$ contains $P_6 + tP_2$. Thus $d(u'_1)=1$. Using the same method, for any vertex $y'\in N(y) \setminus \{x,z\}$, we have $d(y')=1$. And the proof of $d(z)=1$ is similar to the above, so we omit it. Assume that $x$ has no neighbor $x'$ with $d(x')>1$, where $x'$ not equal to $u_1$ or $y$. The additional edge $e=u_1y$ in $G$ does not increase the number of $P_2$ and $T_1$ does not contain $P_6$, contradicting $G \in SAT(n, P_6 + tP_2)$. Hence $x$ has at least one neighbor of degree more than 1. So, $T_1$ contains $T$.

Next we show that for any vertex $x'\in N(x)$ with $d(x')>1$, $N(x')\setminus \{x\}$ are leaves. We distinguish two cases.

{\it Subcase 1.} $x'\notin V(M)$.
If there exists $x''\in N(x')$ with $d(x'')>1$, we have two cases. One is $x''\in V(M)$. Let $x'''$ is the neighbor of $x''$ such that $x''x'''$ belongs to $M$. Then we have $T_1[\{x''',x'',x',x,y,z\}]$ contains $P_6$ and uses one edge in $M$. By replacing $x''x'''$ with $u_1v_1$, we get a copy of $P_6 + tP_2$ in $G$. Another is $x''\notin V(M)$. Whether $x'''$ belongs to $V(M)$ or not, using the same method, we all have $G$ contains $P_6 + tP_2$, a contradiction.

{\it Subcase 2.} $x'\in V(M)$.
If there exists $x''\in N(x')$ with $d(x'')>1$, we can use the same method of Subcase 1 to check $T_1$ contains a copy of $P_6$ by using at most two edges of $M$. By replacing these two edges with $u_1v_1$ (or $yz$) and $u_2v_2$, we get a copy of $P_6 + tP_2$ in $G$, contrary to $G$ is a $(P_6 + tP_2)$-saturated graph.

Recall that $v_2$ be a vertex of $T_2$ with $N(v_2) = \{u_2\}$. Since $G$ is $(P_6 + tP_2)$-saturated, there is $P_6 + tP_2$ in $G + xu_2$ containing the edge $xu_2$. Let $H'$ be the copy and $M'$ be the copy of $tP_2$ in $H'$. If $xu_2$ is not in the $P_6$, by replacing $xu_2$ with $u_2v_2$, we have $P_6 + tP_2$ in $G$, a contradiction. Thus $xu_2$ is in the copy of $P_6$. Since $T_1$ does not contain a path of length 3 with $x$ as its endpoint, $T_2$ contains a path $P'$ of length 2 with $u_2$ as its endpoint. Hence $T_2[V(P')\cup \{v_2\}]$ contains a path $P$ of length 3, $P=v_2,u_2,w_1,w_2$.

Now we show that $T_2$ contains $T$ or $|T_2|\geq 10$. If $d(w_2)\neq 1$, it is easy to prove that there is one vertex in $N(w_2)\setminus \{w_1\}$ is not in $M'$. Hence $T_2$ contains $P_4$ starting from $u_2$. Using the same proof of $T_1$ contains $P_4$ starting from $u_1$, we have $T_2$ contains $T$ as a subgraph. If $d(w_2)= 1$ and $N(w_2)=\{w_1\}$. As $T_2$ is tree, by Lemma \ref{lem2}, $T_2$ has no vertex of degree 2. So, $u_2$ and $w_1$ all have neighbors not in $V(P)$. Note that for any vertex $u'_2\in N(u_2)\setminus V(P)$ or $w'_1\in N(w_1)\setminus V(P)$, if
there is one vertex of $(N(u'_2)\setminus \{u_2\})\cup (N(w'_1)\setminus \{w'_1\})$ is non-leaf, then $T_2$ contains $P_6$. Hence, by Lemma \ref{lem2}, $|T_2|\geq 10$. On the other hand, any vertex of $(N(u'_2)\setminus \{u_2\})\cup (N(w'_1)\setminus \{w'_1\})$ has degree at most 1. Assert that there are two non-leaves adjacent to the same vertex of $\{u_2,w_1\}$, then we have $T_2$ contains $T$ and complete the proof of Case 1. Otherwise, we have two cases. One is at most one vertex $w$ of $N(u_2)\cup N(w_1)$ with $d(w)\neq 1$, joining $w$ with $u_2$ or $w_1$ in $G$ does not increase the number of $P_2$ and $P_6$, contradicting $G \in SAT(n, P_6 + tP_2)$. Another is exactly there is one non-leaf, denoted $u'_2$, adjacent to $u_2$ and one non-leaf, denoted $w'_1$, adjacent to $w_1$. Considering the condition of any vertex of $(N(u'_2)\setminus \{u_2\}) \cup (N(w'_1)\setminus \{w'_1\})$ has degree at most 1, it is easy to check that $T_2$ contains $P_6$ and adding an edge $u'_2w'_1$ to $G$ will not increase the number of $P_2$, contrary to $G$ is $(P_6 + tP_2)$-saturated.

{\it Case 2.} $T_1 + T_2$ contains $P_3$ starting from $u_i$ for $i = 1$ and $i = 2$.

Denote by $P_3=u_1,x,y$ in $T_1$ and $P_3=u_2,w_1,w_2$ in $T_2$. Next, we only prove that $T_1$ contains $T$, and $T_2$ contains $T$ is similar. Clearly $T_1[\{v_1,u_1,x,y\}]$ contains $P_4$. Let $M''$ be the copy of $tP_2$ in $H$. Note that any vertex of $\{u_1,v_1,u_2,v_2,x,y,w_1,w_2\}$ is not in $M''$. Then $T_2$ contains two copies of $P_2$ not in $M''$. For both cases $d(y)\neq 1$ and $d(y)= 1$, we can use a proof similar to Claim 1 to prove. So we omit it. This completes the proof of Lemma \ref{lem7}.
\end{proof}

\begin{theo}\label{th5}
For $n\geq 10t/3+10$, $sat^{\ast}(n, P_6 + tP_2)=n-\lfloor\frac{n}{10}\rfloor$.
\end{theo}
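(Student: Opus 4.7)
The plan is to establish matching upper and lower bounds on $sat^{\ast}(n,P_6+tP_2)$. For the upper bound I exhibit an explicit $(P_6+tP_2)$-saturated graph with no isolated vertices and exactly $n-\lfloor n/10\rfloor$ edges; the lower bound then follows quickly from Lemma~\ref{lem7} combined with edge counting.

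Write $k=\lfloor n/10\rfloor$ and $r=n-10k\in\{0,1,\ldots,9\}$; the hypothesis $n\ge 10t/3+10$ forces $k\ge 1$. Set
\[
G=(k-1)T+T^{\ast},
\]
where $T$ is the tree of Figure~1 and $T^{\ast}$ is the tree of order $10+r$ defined just before Lemma~\ref{lem7}. Then $\delta(G)\ge 1$, every component has longest path $P_5$ so $G$ is $P_6$-free and in particular $(P_6+tP_2)$-free, and the edge count is $9(k-1)+(9+r)=9k+r=n-\lfloor n/10\rfloor$. The main substantive task is to verify saturation: for every non-edge $e$ of $G$, $G+e$ must contain $P_6+tP_2$. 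I split on the location of $e$. If $e$ lies inside a single component $C\in\{T,T^{\ast}\}$, an orbit-by-orbit check on the non-edges of $C$ (edge between two spokes, edge between the centre and a leaf, edge between two leaves at a common spoke, edge between leaves at distinct spokes) produces a $P_6$ in $C+e$; the complement of $V(P_6)$ in $G$ supplies a matching of size at least $3(k-2)+(2+\lfloor r/3\rfloor)$ from the untouched components, plus a spoke-leaf edge that typically survives inside $C$. If $e=uv$ joins two distinct components, I use the fact that from any vertex of $T$ or $T^{\ast}$ there is a path of length at least $2$ reaching a leaf (length $2$ from a centre, $3$ from a spoke or middle, $4$ from a leaf), so $e$ can always be extended on both sides to a $P_6$ using three vertices from each involved component, after which the remaining components carry more than enough disjoint edges. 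The hypothesis $n\ge 10t/3+10$, resolved in each residue class $r\in\{0,\ldots,9\}$, is precisely what is needed to force the surviving matching to have size at least $t$.

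For the lower bound, let $G\in SAT^{\ast}(n,P_6+tP_2)$, let $s$ be its number of tree components, and let $N_T$ be the total number of vertices they contain. Every non-tree component is connected and contains a cycle, so contributes at least as many edges as vertices, giving
\[
|E(G)|\ge (N_T-s)+(n-N_T)=n-s.
\]
If $s\le 1$ then $|E(G)|\ge n-1\ge n-\lfloor n/10\rfloor$, using $n\ge 10$; if $s\ge 2$, Lemma~\ref{lem7} forces each tree component to have at least $10$ vertices, so $10s\le N_T\le n$ and $s\le \lfloor n/10\rfloor$, again yielding $|E(G)|\ge n-\lfloor n/10\rfloor$. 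The main obstacle of the whole proof is thus not the lower bound but the saturation verification of the construction: while each individual case is conceptually straightforward, there are many orbits of added edges (inside $T$, inside $T^{\ast}$, and between components, classified by whether each endpoint is a centre, a spoke or middle, or a leaf), and in each case one must simultaneously identify a suitable $P_6$ through the new edge and confirm that the hypothesis $n\ge 10t/3+10$ leaves at least $t$ disjoint edges in the complement.
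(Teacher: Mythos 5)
Your proposal is correct and follows essentially the same route as the paper: the lower bound via counting tree components and invoking Lemma~\ref{lem7} to force each of them to have order at least $10$, and the upper bound via the same construction $(q-1)T+T^{\ast}$ with $q=\lfloor n/10\rfloor$. In fact your sketch of the saturation verification and of where the hypothesis $n\ge 10t/3+10$ enters is more explicit than the paper's, which simply asserts that $G^{\ast}+e$ contains $P_6+(3q+\lfloor r/3\rfloor-3)P_2$ as obvious.
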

\begin{proof}
Suppose $sat^{\ast}(n, P_6 + tP_2)< n-\lfloor\frac{n}{10}\rfloor$, then there is a graph $G\in SAT^{\ast}(n, P_6 + tP_2)$ with $|E(G)|<n-\lfloor\frac{n}{10}\rfloor$. Let $G = R + (T_1 + \cdots + T_k)$, where $T_1, \ldots, T_k$ are all the
tree components of $G$. Hence,
$$|E(G)| = |E(R)| +\sum_{i=1}^{k}|E(T_{i})|\geq |R|+\sum_{i=1}^{k}(|T_{i}|-1)=|G|-k=n-k.$$
Since $|E(G)|< n-\lfloor\frac{n}{10}\rfloor$, we have $k>\lfloor\frac{n}{10}\rfloor$. As $k\geq 2$, by Lemma \ref{lem7}, $|T_i|\geq 10$ for $1 \leq i \leq k$. Hence, $n\geq 10k$, contrary to $k>\lfloor\frac{n}{10}\rfloor$. It follows that $sat^{\ast}(n, P_6 + tP_2)\geq n-\lfloor\frac{n}{10}\rfloor$.

On the other hand, denote $n = 10q +r$, where $q=\lfloor\frac{n}{10}\rfloor$, $0 \leq r \leq 9$. Since $n \geq 10t/3+10$, we have $10q +r\geq 10t/3+10$. Then
$$t\leq 3q+\lfloor\frac{3r}{10}\rfloor-3\leq 3q+\lfloor\frac{r}{3}\rfloor-3.$$
Consider the graph
$$G^{\ast}=(q-1)T + T^{\ast}.$$
Obviously $G^{\ast}$ contains no copy of $P_6$ and $G^{\ast}+e$ contains a copy of $P_6 + (3q+\lfloor\frac{r}{3}\rfloor-3)P_2$ for any $e\in E(\overline{G^{\ast}})$. This implies that $G^{\ast}$ is $(P_6 + tP_2)$-saturated. Since $|V_0(G^{\ast})|=0$, $G^{\ast} \in SAT^{\ast}(n, P_6 + tP_2)$. Hence $sat^{\ast}(n, P_6 + tP_2)=E(G^{\ast})=n-\lfloor\frac{n}{10}\rfloor$. This completes the proof of Theorem \ref{th5}.
\end{proof}
Finally, we show that the proof of Theorem \ref{th4}.
\begin{proof}
(1) Suppose $G$ is $(P_6+tP_2)$-saturated. If $|V_0(G)| = 1$, by Lemma \ref{lem3}, $V_1(G) = \emptyset$. By degree-sum formula,
$$2|E(G)| =\sum_{x\in V(G)}d(x)\geq 2(|G|-1).$$ For $n\geq \frac{10t}{3}+10$, $|E(G)|\geq |G|-1=n-1>n-\lfloor\frac{n}{10}\rfloor\geq min\{n-\lfloor\frac{n}{10}\rfloor,3t+18\}$. If $|V_0(G)| = 0$ or $|V_0(G)| \geq 2$, by Theorem \ref{lem6} and Theorem \ref{th5}, we have $sat(n,P_6 + tP_2)= min\{n-\lfloor\frac{n}{10}\rfloor,3t+18\}$ for $n\geq \frac{10t}{3}+10$. This complete the proof.

(2) By $n>\frac{10t}{3}+20$, we have $n-\lfloor\frac{n}{10}\rfloor >3t+18$. Consequently $sat(n,P_6 + tP_2)=3t+18$. Let $G\in SAT(n, P_6 + tP_2)$ with $|E(G)| = 3t + 18$. By Theorem \ref{th5}, we have $G \notin SAT^{*}(n,P_6 + tP_2)$ and hence $|V_0(G)|\neq 0$. If $|V_0(G)| = 1$, we obtain that
$$|E(G)|\geq |G|- 1>\frac{10t}{3}+20-1=\frac{10t}{3}+19>3t+18,$$ a contradiction. Thus $|V_0(G)| \geq 2$. By Theorem \ref{lem6}, we have $\underline{SAT}(n, P_6 + tP_2) = \{K_7 + (t-1)K_3 + \overline{K}_{n-3t-4}\}$. This completes the proof of Theorem \ref{th4}.
\end{proof}

{\bf Acknowledgement}
This research was supported Science and Technology Commission of Shanghai Municipality (STCSM) grant 18dz2271000.

\end{document}